\newcommand{\bmat}[1]{\begin{bmatrix}#1\end{bmatrix}}
\newcommand{\clred}[1]{{\color{black}#1}}
\crefname{hypothesis}{Hypothesis}{Hypotheses}
\newtheorem{ex}{Example}
\colorlet{texcscolor}{blue!50!black}
\colorlet{texemcolor}{red!70!black}
\colorlet{texpreamble}{red!70!black}
\colorlet{codebackground}{black!25!white!25}
\lstdefinestyle{siamlatex}{%
  style=tcblatex,
  texcsstyle=*\color{texcscolor},
  texcsstyle=[2]\color{texemcolor},
  keywordstyle=[2]\color{texemcolor},
  moretexcs={cref,Cref,maketitle,mathcal,text,headers,email,url},
}
\DeclareTotalTCBox{\code}{ v O{} }
{ 
  fontupper=\ttfamily\color{black},
  nobeforeafter,
  tcbox raise base,
  colback=codebackground,colframe=white,
  top=0pt,bottom=0pt,left=0mm,right=0mm,
  leftrule=0pt,rightrule=0pt,toprule=0mm,bottomrule=0mm,
  boxsep=0.5mm,
  #2}{#1}
\patchcmd\newpage{\vfil}{}{}{}
\title{When does stabilizability imply the existence of infinite horizon optimal control in nonlinear systems?\thanks{%
Submitted to the editors DATE.%
\funding{The work was supported, in part, by JSPS KAKENHI Grant Numbers
JP26289128 and Nanzan University Pache Research Subsidy I-A-2 for the 2020 academic year.}}}
\author{
Noboru Sakamoto\thanks{
Faculty of Science and Engineering, Nanzan University, Yamazato-cho, Showa-ku, Nagoya, 464-8673, Japan 
(\email{noboru.sakamoto@nanzan-u.ac.jp}).}
}
\begin{document}
\maketitle

\begin{tcbverbatimwrite}{tmp_\jobname_abstract.tex}
\begin{abstract}
The paper addresses an existence problem for infinite horizon optimal control when the system under control is exponentially stabilizable or stable. Classes of nonlinear control systems for which infinite horizon optimal controls exist are identified in terms of stability, stabilizability, detectability and growth conditions. The result then applies to estimate the existence region of stable manifolds in the associated Hamiltonian systems. Applications of the results also include the analysis for turnpike property in nonlinear finite horizon optimal control problems by a geometric approach.  
\end{abstract}

\begin{keywords}
  Optimal control, Stability, Stabilizability, Detectability, Stable manifold, Turnpike.
\end{keywords}

\begin{AMS}
  49K15, 49J15, 93D20, 93C10 
\end{AMS}
\end{tcbverbatimwrite}
\input{tmp_\jobname_abstract.tex}
\section{Introduction}
Optimal control problems (OCPs) are of significance from mathematical and engineering viewpoints, as applications and extensions of Calculus of Variations as well as design tools for systems describing engineering processes. There are two approaches to OCPs, one from \clred{the} sufficiency of optimality (Dynamic Programming \cite{Bellman:57:DP} developed by Bellman) and the other from necessity (Maximum Principle \cite{Pontryagin:62:MTOP} developed by Pontryagin). We refer to \cite{Athans:66:OC,Bryson:75:AOC,Cesari:83:OTA,Liberzon:12:CVOCT} for the theory of OCPs and to \cite{Bryson:96:cssmag} for a survey on OCPs from mathematical and engineering viewpoints. 
\clred{OCPs for infinite horizon are of special interest in engineering, such as linear quadratic regulator problems, since the stability issues are inherently involved in such problems.}

In the Dynamic Programming approach for infinite horizon OCPs, one derives a nonlinear partial differential equation, called {\em Hamilton-Jacobi-Bellman equation} (HJBE), the solution of which gives an optimal control as a feedback law. There is \clred{a} large amount of research on \clred{the} solution method for HJBEs, for which we refer to \cite{Al'brekht:61:jamm,Lukes:69:sicon,Navasca2007251,Aguilar2014527} for Taylor expansion method and to \cite{Kreisselmeier:94:ieee-ac,Beard:97:automatica,Beard:98:ijc,Sakamoto:08:ieee-ac,Ohtsuka:11:ieee-ac,Sakamoto:13:automatica} for other numerical or algebraic approaches (see \cite{Beeler:00:jota} for a survey on the numerical methods for HJBE). Interestingly, when one applies these methods, no information for the solvability region is available and only local solvability around an equilibrium is examined, which amounts to the stabilizability and \clred{the detectability} of the linear part. 
For instance, swing-up and stabilization \clred{feedbacks} for inverted pendulum and acrobot are obtained in \cite{Horibe:17:ieee_cst,Horibe:19:ieee_cst} by numerically solving HJBEs. However, no theory a priori guarantees the solvability of the OCPs for initial pending positions. 

In this paper, motivated \clred{by} linear control theory, we wish to clarify under what conditions stabilizability (or stability) guarantees the solvability of infinite horizon OCPs. In our study, we restrict ourselves to \clred{an} affine nonlinear control system and to a cost functional that consists of a quadratic term on inputs and a nonnegative penalty function on states. 
We make full use of these structures to prove that if \clred{the} free dynamics is globally exponentially stable and \clred{the} input matrix is bounded, an optimal control exists globally and that if the control system is exponentially stabilizable, certain growth conditions at infinity are satisfied and detectability and coercivity conditions on the penalty function on the states are satisfied, then, an optimal control exists in the stabilizable region. 
\clred{
The results allow us to utilize a large number of works in nonlinear stability and stabilizability such as feedback linearization, the notion of zero dynamics and backstepping method in \cite{Nijmeijer:90:NDCS,Isidori:95:NCS,Khalil:92:NS,vanderSchaft:17:LGPTNC,Sepulchre:97:CNC} in the analysis for OCPs (see \S~\ref{sctn:applications}).
}

Another motivation of the paper arises from turnpike phenomena in optimal control. It is often observed that under certain conditions optimal control and corresponding trajectory for finite (but long) horizon \clred{problems} are exponentially close to their steady-state optimum counterparts most of \clred{the} time in the control process except for the beginning and the end \clred{in} thin intervals. Turnpike is a metaphor used in econometrics \cite{Mckenzie:63:econometrica} for this behavior of optimally controlled systems as, when traveling from one place to a distant place, we always take \clred{the} highway to cover the distance at the best rate\cite{Dorfman:58:LPEA}. In control theory, this property is observed first in \cite{Wilde:72:ieeetac,Rockafellar:73:jota} as dichotomy or saddle point property. We refer to \cite{Carlson:91:IHOC,Zaslavski:06:TPCVOC} for general accounts on turnpike theory in control systems. The turnpike phenomena are investigated from nonlinear control \cite{Anderson:87:automatica,Trelat:15:jde}, Hamilton-Jacobi theoretic \cite{Rapaport:04:esaim}, PDEs \cite{Porretta:13:sicon,Porretta:16:INdAM,Gugat:16:syscon,Trelat:18:sicon,Zuazua:17:arc}, dissipative system theoretic \cite{Damm:14:sicon,Grune:16:syscon,Faulwasser:17:automatica,Berberich:18:ieeecst,Grune:18:sicon} and geometric (or dynamical system theoretic) \cite{sakamoto:20:prep_cdc} viewpoints. The framework in \cite{sakamoto:20:prep_cdc} to study the turnpike is based on stable and unstable manifolds of a Hamiltonian system associated with an OCP. It is shown that turnpike if the stable and unstable manifolds satisfy certain conditions.
Under some conditions on the linear part of systems, \clred{\S~\ref{sctn:stabl_manifold} of the present paper shows that} the problem to find solvability region for OCPs is equivalent to estimate the existence region of a stable manifold in the base space (control space) for associated Hamiltonian systems. \clred{This analysis for the stable manifold is closely related to the transversality condition for infinite horizon OCPs, for which the papers such as \cite{Aseev20041094,Aseev20071,Cannarsa20181188} consider under more general conditions than those in the present paper.}

The structure of the paper is as follows. In \S~\ref{sctn:existence_OC}, the Direct Method of Calculus of Variations is applied to show the existence of optimal control for exponentially stable and stabilizable cases. In \S~\ref{sctn:stabl_manifold}, the existence of costates $p(t)$ defined on $[0,\infty)$ with $p(\infty)=0$ is shown, which is equivalent to the existence of stable manifold. 
In \S~\ref{sctn:applications}, we show several classes of nonlinear systems in which the results in this paper are applicable. One of them is a class where stable and unstable manifolds in associated Hamiltonian systems exist with a canonical projection property to the base space, and therefore, the turnpike occurs in finite interval OCPs.

\section{Existence of optimal control}\label{sctn:existence_OC}
Let us consider a nonlinear control system of the form
\begin{equation}
    \dot{x}=f(x)+g(x)u,\ x(0)=x_0,\label{eqn:n_sys}
\end{equation}
where $f:\mathbb{R}^n\to\mathbb{R}^n$ and $g:\mathbb{R}^n\to \mathbb{R}^{n\times m}$ are $C^2$ maps. Let us assume that $x=0$ is an equilibrium of $\dot x=f(x)$; $f(0)=0$. The OCP for (\ref{eqn:n_sys}) is to find a control input $u$ that minimizes a cost functional. In this paper, we consider a cost functional of the form
\begin{equation}
    J=\int_0^\infty |u(t)|^2/2+h(x(t))\,dt \label{eqn:cost}
\end{equation}
with control set $L^2((0,\infty);\mathbb{R}^m)$. The function $h:\mathbb{R}^n\to\mathbb{R}$ is a locally Lipschitz nonnegative function $h(x)\geqslant0$ with $h(0)=0$ that penalizes $x$ in the process of control. $J$ is a functional on $L^2((0,\infty);\mathbb{R}^m)$ taking values in $\mathbb{R}^+\cup \{+\infty\}$ and we denote its value for $u$ by $J(u)$. When a solution to (\ref{eqn:n_sys}) for a $u\in L^2((0,\infty);\mathbb{R}^m)$ has finite escape time, we set $J(u)=\infty$. 


The first problem we tackle in the paper is to determine the conditions under which stability/stabilizability of (\ref{eqn:n_sys}) guarantees the existence of optimal control (\ref{eqn:n_sys})-(\ref{eqn:cost}). 
Throughout this section, the initial condition $x(0)=x_0$ for (\ref{eqn:n_sys}) is fixed. Since input function is not assumed to be piecewise continuous, we use a generalized notion of solution for ordinary differential equations (ODEs). For an ODE, a Carath\'eodory solution is an absolute continuous function defined on an interval $I\subset\mathbb{R}$ such that it satisfies the ODE except on a subset of $I$ which has zero Lebesgue measure (see, e.g., page 28 of \cite{Hale:73:ODE}). 
\begin{lemma}
For each $u\in L^2((0,\infty);\mathbb{R}^m)$ and $t_0\geqslant0$, there exists a unique solution passing through $(t_0,x_0)$ in the sense of \clred{Carath\'eodory} for (\ref{eqn:n_sys}). 
\end{lemma}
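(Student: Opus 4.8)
The plan is to verify the classical Carath\'eodory hypotheses for the time-dependent vector field $F(t,x):=f(x)+g(x)u(t)$ and then to invoke the standard Carath\'eodory existence-and-uniqueness theorem (as in the reference \cite{Hale:73:ODE} already cited). The only nonroutine feature is that the input $u$ is merely $L^2$, hence discontinuous in $t$; the whole point is to show that this still yields the $L^1_{\mathrm{loc}}$ bounds the Carath\'eodory framework requires.

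First I would fix a closed ball $\overline{B}:=\overline{B}(x_0,r)\subset\mathbb{R}^n$ and a finite interval $[t_0,t_0+a]$, and check three properties of $F$ restricted to $[t_0,t_0+a]\times\overline{B}$. For measurability in time, note that $u$ is measurable and $g$ is continuous, so for each fixed $x$ the map $t\mapsto g(x)u(t)$ is measurable and hence so is $t\mapsto F(t,x)$. For continuity in the state, $f,g\in C^2$ gives that $x\mapsto F(t,x)$ is continuous for every $t$. For an integrable majorant, continuity of $f,g$ makes them bounded on the compact set $\overline{B}$, say $|f|\leqslant M_f$ and $\|g\|\leqslant M_g$ there, so that $|F(t,x)|\leqslant M_f+M_g|u(t)|=:m(t)$; and $m\in L^1([t_0,t_0+a])$ because $u\in L^2$ restricts to $L^1$ on any bounded interval by Cauchy--Schwarz, $\int_{t_0}^{t_0+a}|u|\,dt\leqslant\sqrt{a}\,\|u\|_{L^2}$. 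These three facts give, by the Carath\'eodory existence theorem, an absolutely continuous local solution through $(t_0,x_0)$.

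For uniqueness I would additionally produce a Lipschitz bound in $x$ with an $L^1$ coefficient. Since $f,g\in C^2$, their Jacobians are bounded on the compact ball $\overline{B}$, so $f$ and $g$ are Lipschitz there with constants $L_f,L_g$, whence
\begin{equation}
|F(t,x)-F(t,y)|\leqslant\bigl(L_f+L_g|u(t)|\bigr)|x-y|=:l(t)\,|x-y|,\qquad x,y\in\overline{B},
\end{equation}
with $l\in L^1([t_0,t_0+a])$ by the same Cauchy--Schwarz estimate. Writing two solutions in integral form and applying Gr\"onwall's inequality with the weight $l(t)$ forces them to coincide on their common interval, giving uniqueness; patching uniqueness across overlapping intervals then yields a single unique solution on the maximal interval of existence.

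The main obstacle is thus concentrated in one place: transferring the $L^2$ regularity of $u$ into the $L^1_{\mathrm{loc}}$ bounds on $m$ and $l$ that the Carath\'eodory machinery consumes; once the Cauchy--Schwarz embedding $L^2\hookrightarrow L^1$ on bounded intervals is in hand, both the majorant and the Lipschitz coefficient are integrable and the remaining steps are standard. I would finally remark that, because the vector field grows with $x$ and the bounds are only local, the solution need not be global and may exhibit finite escape time; the lemma accordingly asserts existence and uniqueness on the maximal interval, which is exactly the setting in which the convention $J(u)=\infty$ for finite-escape-time inputs was introduced.
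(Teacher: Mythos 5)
Your proposal is correct and follows essentially the same route as the paper's own proof: both verify the Carath\'eodory hypotheses for $F(t,x)=f(x)+g(x)u(t)$ by bounding $|F(t,x)|$ and its Lipschitz modulus on a compact set by affine expressions in $|u(t)|$, observe that these are locally integrable in $t$ (the paper leaves the $L^2\hookrightarrow L^1_{\mathrm{loc}}$ step implicit, you make it explicit via Cauchy--Schwarz), and then invoke the standard Carath\'eodory existence--uniqueness theory (Theorems~5.1 and 5.3 of \cite{Hale:73:ODE} in the paper; your Gr\"onwall argument is just the proof of that uniqueness theorem written out). Your closing remark about maximal intervals and finite escape time correctly matches the paper's convention $J(u)=\infty$ in that case.
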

\begin{proof}
Take an arbitrary compact set $K\subset \mathbb{R}^n$. Then, \clred{for $x\in K$,}
\begin{align}
&|f(x)+g(x)u(t)|\leqslant \sup_{K}|f(x)|+\sup_{K}\|g(x)\|\,|u(t)|\label{eqn:carathe1}\\
&
\begin{aligned}
|f(x_1)+g(x_1)u(t) - f(x_2)-g(x_2)u(t)|&\leqslant |f(x_1)-f(x_2)|+\|g(x_1)-g(x_2)\|\,|u(t)|\\
&\leqslant (M_f + M_g|u(t)|)|x_1-x_2|,
\end{aligned}\label{eqn:carathe2}
\end{align}
where $\|\cdot\|$ is the matrix induced norm and $M_f$, $M_g$ are constants such that 
\[
|f(x_1)-f(x_2)|\leqslant M_f|x_1-x_2|, \quad 
\|g(x_1)-g(x_2)\|\leqslant M_g|x_1-x_2|
\]
for all $x_1$, $x_2$ in $K$. The right hands of (\ref{eqn:carathe1}), (\ref{eqn:carathe2}) are locally integrable functions of $t$. Therefore, from Theorems~5.1 and 5.3 in \S~1 of \cite{Hale:73:ODE}, there exists a unique solution that is absolute continuous. 
\end{proof}
For $u\in L^2((0,\infty);\mathbb{R}^m)$, let us denote the corresponding solution for (\ref{eqn:n_sys}) by \clred{$x_u$}. 
\begin{proposition}\label{prop:minimizer} 
Assume that for each bounded set $\mathscr{U}$ in $L^2((0,\infty);\mathbb{R}^m)$, the corresponding set 
$\{x_u\,|\, u\in\mathscr{U} \}$ is bounded in $C^0([0,T];\mathbb{R}^n)$ for any $T>0$. 
Assume also that there exists a $u\in L^2((0,\infty);\mathbb{R}^m)$ such that $J(u)<+\infty$. Then, there exists a $\bar u \in L^2((0,\infty);\mathbb{R}^m)$ such that $J(\bar u)=\inf_{L^2((0,\infty);\mathbb{R}^m)}J$. 
\end{proposition}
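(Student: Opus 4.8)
The plan is to apply the Direct Method of the Calculus of Variations. Since $h\geqslant 0$, the functional $J$ is bounded below by $0$, and by the second assumption the value $m:=\inf_{L^2((0,\infty);\mathbb{R}^m)}J$ is finite. I would start from a minimizing sequence $\{u_k\}\subset L^2((0,\infty);\mathbb{R}^m)$ with $J(u_k)\to m$. Because $h\geqslant 0$, we have $\tfrac12\|u_k\|_{L^2}^2\leqslant J(u_k)$, so $\{u_k\}$ is bounded in the Hilbert space $L^2((0,\infty);\mathbb{R}^m)$; reflexivity then yields a subsequence (not relabeled) with $u_k\rightharpoonup\bar u$ weakly for some $\bar u\in L^2((0,\infty);\mathbb{R}^m)$. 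The remaining task is to show that $\bar u$ is a minimizer, which splits into (i) passing to the limit in the dynamics to identify the limiting trajectory, and (ii) proving weak lower semicontinuity of $J$.

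For (i), I would first extract a convergent sequence of states. The bounded set $\{u_k\}$ falls under the standing hypothesis, so $\{x_{u_k}\}$ is bounded in $C^0([0,T];\mathbb{R}^n)$ for every $T>0$; in particular the trajectories on $[0,T]$ lie in a fixed compact set $K$, on which $f$ and $g$ are bounded. Writing the solutions in integral form and estimating
\begin{equation*}
|x_{u_k}(t)-x_{u_k}(s)|\leqslant \int_s^t\!\big(|f(x_{u_k})|+\|g(x_{u_k})\|\,|u_k|\big)\,d\tau \leqslant C|t-s|+C'|t-s|^{1/2}\|u_k\|_{L^2},
\end{equation*}
where the last bound uses the Cauchy--Schwarz inequality, shows that $\{x_{u_k}\}$ is equicontinuous on $[0,T]$. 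By Arzel\`a--Ascoli together with a diagonal argument over $T=1,2,\dots$, a further subsequence converges uniformly on every compact subinterval to some $\bar x\in C^0([0,\infty);\mathbb{R}^n)$.

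The crux of the argument is to verify that $\bar x=x_{\bar u}$ by passing to the limit in
\begin{equation*}
x_{u_k}(t)=x_0+\int_0^t f(x_{u_k}(\tau))\,d\tau+\int_0^t g(x_{u_k}(\tau))u_k(\tau)\,d\tau.
\end{equation*}
The $f$-term is easy: uniform convergence of $x_{u_k}$ and continuity of $f$ give convergence of the integral. The hard part will be the bilinear term $g(x_{u_k})u_k$, a product of the strongly convergent factor $g(x_{u_k})$ and the only weakly convergent factor $u_k$. I would split it as $g(x_{u_k})u_k-g(\bar x)\bar u=[g(x_{u_k})-g(\bar x)]u_k+g(\bar x)[u_k-\bar u]$; the first integral tends to $0$ because $\|g(x_{u_k})-g(\bar x)\|\to0$ uniformly on $[0,t]$ while $\|u_k\|_{L^2}$ stays bounded, and the second tends to $0$ because the entries of $g(\bar x)\mathbf 1_{[0,t]}$ belong to $L^2$ and $u_k\rightharpoonup\bar u$. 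Hence $\bar x$ solves the integral equation driven by $\bar u$, and by the uniqueness in the Lemma, $\bar x=x_{\bar u}$; in particular $x_{\bar u}$ is defined on all of $[0,\infty)$.

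Finally, for (ii) I would treat the two terms of $J$ separately along this subsequence. The map $u\mapsto\tfrac12\|u\|_{L^2}^2$ is convex and strongly continuous, hence weakly lower semicontinuous, so $\tfrac12\|\bar u\|_{L^2}^2\leqslant\liminf_k\tfrac12\|u_k\|_{L^2}^2$. For the state penalty, $h(x_{u_k}(t))\to h(x_{\bar u}(t))$ pointwise by continuity of $h$ and uniform convergence of the trajectories, and since $h\geqslant0$ Fatou's lemma gives $\int_0^\infty h(x_{\bar u})\,dt\leqslant\liminf_k\int_0^\infty h(x_{u_k})\,dt$. Adding the two inequalities and using superadditivity of $\liminf$,
\begin{equation*}
J(\bar u)\leqslant \liminf_k\tfrac12\|u_k\|_{L^2}^2+\liminf_k\int_0^\infty h(x_{u_k})\,dt\leqslant \liminf_k J(u_k)=m.
\end{equation*}
Since $\bar u\in L^2$ forces $J(\bar u)\geqslant m$, we conclude $J(\bar u)=m$, so $\bar u$ is the desired minimizer. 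The main obstacle is precisely the weak/strong product limit in step (i), which is why the boundedness hypothesis on $\{x_u\}$, furnishing the compactness needed for Arzel\`a--Ascoli, is essential.
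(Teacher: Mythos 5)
Your proof is correct, and your step (i) --- weak compactness of the minimizing sequence via reflexivity, Arzel\`a--Ascoli with a diagonal argument, and the splitting $g(x_{u_k})u_k-g(\bar x)\bar u=[g(x_{u_k})-g(\bar x)]u_k+g(\bar x)[u_k-\bar u]$ to handle the weak--strong product in the integral equation --- is exactly the argument the paper gives. Where you genuinely diverge is the lower semicontinuity of the state-penalty term. The paper devotes a whole step to a functional-analytic argument: it observes that $\{h(x_m)^{1/2}\}$ is bounded in $L^2((0,\infty);\mathbb{R})$, extracts a weak limit $l$ by Banach--Alaoglu plus a diagonal argument over the intervals $[0,T]$, identifies $l=h(\bar x)^{1/2}$ by comparing with the strong $L^2([0,T])$ limit and invoking uniqueness of weak limits, and only then applies weak lower semicontinuity of the $L^2$ norm to get $\int_0^\infty h(\bar x)\,dt\leqslant\liminf_m\int_0^\infty h(x_m)\,dt$. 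You replace that entire step by a single application of Fatou's lemma: since $h(x_{u_k}(t))\to h(x_{\bar u}(t))$ pointwise (uniform convergence on compacts plus continuity of $h$) and $h\geqslant0$, the inequality follows at once. Your route is shorter and more elementary --- it needs only continuity of $h$ rather than the $L^2$ structure of $h^{1/2}$ along trajectories --- and it yields precisely the inequality the proof requires; the paper's route additionally identifies the weak $L^2((0,\infty);\mathbb{R})$ limit of $h(x_m)^{1/2}$, but that extra information is never used afterwards. Both proofs then conclude identically, combining weak lower semicontinuity of the norm for the control term with superadditivity of $\liminf$.
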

\begin{proof}
\underline{Step 1.} From the existence of a $u\in L^2((0,\infty);\mathbb{R}^m)$ such that $J(u)<\infty$, there exists a minimizing sequence $\{u_m\}\subset L^2((0,\infty);\mathbb{R}^m)$. For sufficiently large $m$, we have $J(u_m)<1+\inf_{L^2((0,\infty);\mathbb{R}^m)}J$ and therefore, $\{u_m\}$ is a bounded set. By Banach-Alaoglu Theorem, up to subsequence, $u_m$ weakly converges to a $\bar u \in L^2((0,\infty);\mathbb{R}^m)$. Let $x_m:=\clred{x_{u_m}}$. Then, from the assumption, $\{x_m\}$ is uniformly bounded. \clred{Take an arbitrary $T>0$ and} we will show that $x_m$ is equicontinuous \clred{on $[0,T]$}. Let us take arbitrary $t_1<t_2$ \clred{in $[0,T]$}. Then, 
\begin{align*}
|x_m(t_2)-x_m(t_1)|&\leqslant\int_{t_1}^{t_2}|f(x_m(s))|+\|g(x_m(s))\|\,|u_m(s)|\,ds\\
&\leqslant C_1|t_2-t_1|+C_2\int_{t_1}^{t_2}|u_m(s)|\,ds\\
&\leqslant C_1|t_2-t_1|+C_2\sqrt{|t_2-t_1|}\|u_m\|_{L^2((0,\infty);\mathbb{R}^m)}\\
&\leqslant C_1|t_2-t_1|+C_2 \sup_{m\in\mathbb{N}}\|u_m\|_{L^2((0,\infty);\mathbb{R}^m)}\sqrt{|t_2-t_1|},
\end{align*}
where we have taken constants $C_1$, $C_2>0$, \clred{which may depend on $T$,} such that 
\[
|f(x_m(t))|<C_1, \quad \|g(x_m(t))\|<C_2 \quad \text{for }t\in [0,T], m\in\mathbb{N}
\]
by using the uniform boundedness of $x_m$. This proves that $x_m$ is equicontinuous. From Ascoli-Arzel\'a Theorem, up to subsequence $x_m$ uniformly converges to an $\bar x\in C^0([0,\infty);\mathbb{R}^n)$ on $[0,T]$. 

Next, we prove that $\bar x = \clred{x_u}$. Since 
\[
x_m(t)=x_0+\int_0^tf(x_m(s))+g(x_m(s))u_m(s)\,ds,
\]
and uniform convergence of $x_m$ to $\bar x$, it suffices to prove that 
\[
\int_0^tg(x_m(s))u_m(s)\,ds\to \int_0^tg(\bar x(s))\bar u(s)\,ds
\quad \text{as }m\to\infty
\]
from the uniqueness of solutions of initial value problems. 
First we note that
\begin{align}
\left| \int_0^tg(x_m(s))u_m(s)-g(\bar x(s))u_m(s)\,ds \right|
&\leqslant \int_0^t\|g(x_m(s))-g(\bar x(s))\|\,|u_m(s)|\,ds\nonumber\\
&\leqslant \left(\int_0^t\|g(x_m(s))-g(\bar x(s))\|^2\,ds\right)^{1/2} \left(\int_0^t|u_m(s)|^2\,ds\right)^{1/2}\nonumber\\
&\leqslant\sup_{m\in\mathbb{N}}\|u_m\|_{L^2((0,\infty);\mathbb{R}^m)}  \left(\int_0^t\|g(x_m(s))-g(\bar x(s))\|^2\,ds\right)^{1/2}\nonumber\\
&\to 0 \quad \text{as}\ m\to\infty. \label{ineq:conv_gmum}
\end{align}
Let $[\,\cdot\,]_j$ denote the $j$-th component of a vector for $j=1,\ldots,n$. Then, the operator 
\[
u\in L^2((0,\infty);\mathbb{R}^m)\mapsto 
\int_0^t[g(\bar x(s))u(s)]_j\,ds
\]
is a linear bounded functional for each $t\geqslant0$ since 
\begin{align*}
\left| \int_0^t[g(\bar x(s))u(s)]_j\,ds \right|
&\leqslant \int_0^t\|g(\bar x(s))\|\,|u(s)|\,ds\\
&\leqslant \left(\int_0^t\|g(\bar x(s))\|^2\,ds\right)^{1/2}\|u\|_{L^2((0,\infty);\mathbb{R}^m)}. 
\end{align*}
Therefore, we have 
\begin{equation}
\lim_{m\to\infty}\int_0^tg(\bar x(s))u_m(s)\,ds=\int_0^tg(\bar x(s))\bar u(s)\,ds\label{eqn:lim_gbar_um}
\end{equation}
from the weak convergence of $u_m$ in $L^2((0,\infty);\mathbb{R}^m)$. 
From (\ref{ineq:conv_gmum}) and (\ref{eqn:lim_gbar_um}), 
\begin{align*}
\lim_{m\to\infty}\int_0^tg(x_m(s))u_m(s)\,ds 
&=\lim_{m\to\infty} \left[\int_0^t g(\bar x(s))u_m(s)\,ds +\int_0^t(g(x_m(s))-g(\bar x(s)))u_m(s)\,ds\right]\\
&=\int_0^tg(\bar x(s))\bar u(s)\,ds. 
\end{align*}

\noindent
\underline{Step 2.} 
For a sufficiently large $m$, 
\begin{align*}
1+\inf_{L^2((0,\infty);\mathbb{R}^m)}J &\geqslant \int_0^\infty \frac{1}{2}|u_m(t)|^2+h(x_m(t))\,dt\\
&\geqslant \int_0^\infty h(x_m(t))\,dt=\|h(x_m)^{1/2}\|_{L^2((0,\infty);\mathbb{R})}
\end{align*}
and $\{h(x_m)^{1/2}\}$ is a bounded set in $L^2((0,\infty);\mathbb{R})$. Replacing $\int_0^\infty\,dt$ with $\int_0^T\,dt$, it is also a bounded set in $L^2((0,T);\mathbb{R})$ for all $T>0$. By Banach-Alaoglu Theorem and the diagonal argument, up to subsequence, we have 
\begin{gather*}
h(x_m)^{1/2}\to l \quad \text{weakly in }L^2((0,\infty);\mathbb{R}),\\
h(x_m)^{1/2}\to l \quad \text{weakly in }L^2((0,T);\mathbb{R}) \quad \text{for all }T>0,
\end{gather*}
as $m\to\infty$ for some $l\in L^2((0,\infty);\mathbb{R})$. On the other hand, 
\[
\int_0^T|h(x_m(t))-h(\bar x(t))|^2\,dt\to0 \quad\text{as }m\to\infty \text{ for all }T>0, 
\]
implying that
\[
h(x_m)^{1/2}\to h(\bar x)^{1/2} \quad \text{strongly in }L^2((0,T);\mathbb{R}) \quad \text{for all }T>0.
\]
From the uniqueness of weak limit, we have $h(\bar x)=l$ on $[0,T]$ for all $T>0$. Thus, we have shown that 
\begin{equation}
h(x_m)^{1/2}\to h(\bar x)^{1/2} \quad \text{weakly in }L^2((0,\infty);\mathbb{R}). \label{eqn:weaklimit_h}
\end{equation}
\noindent
\underline{Step 3.} 
From the weak convergence of $u_m$ to $\bar u$ in $L^2((0,\infty);\mathbb{R}^m)$, $h(x_m)^{1/2}$ to $h(\bar x)^{1/2}$ in $L^2((0,\infty);\mathbb{R})$
and lower semi-continuity of norm for weak topology, 
\begin{gather*}
\|\bar u\|_{L^2((0,\infty);\mathbb{R}^m)}\leqslant \liminf_{m\to\infty}\|u_m\|_{L^2((0,\infty);\mathbb{R}^m)},\\
\|h(\bar x)^{1/2}\|_{L^2((0,\infty);\mathbb{R})}\leqslant
    \liminf_{m\to\infty}\|h(x_m)^{1/2}\|_{L^2((0,\infty);\mathbb{R})},
\end{gather*}
and it holds that 
\begin{align*}
    J(\bar u) =\int_0^\infty \frac{1}{2}|\bar u(t)|^2+h(\bar x(t))\,dt &\leqslant \frac{1}{2}\liminf_{m\to\infty}\|u_m\|_{L^2((0,\infty);\mathbb{R}^m)}
    +\liminf_{m\to\infty}\int_0^\infty h(x_m(t))\,dt\\
    &= \liminf_{m\to\infty}J(u_m)=\inf_{L^2((0,\infty);\mathbb{R}^m)}J.
\end{align*}
This proves that $\bar u$ is an optimal control. 
\end{proof}

\subsection{Exponentially stable case}\label{sctn:exp_stable}
In this section, we consider the case where the free dynamics $\dot x =f(x)$ in (\ref{eqn:n_sys}) is globally exponentially stable. That is, there exist constants \clred{$\mu>0$ and $K>0$ that are independent of $x_0$} such that the following estimate for the corresponding solution $x(t,x_0)$ holds 
\[
|x(t,x_0)|\leqslant K\clred{|x_0|}e^{-\mu t} \text{ for }t\geqslant0, \ x_0\in \mathbb{R}^n.
\]
\begin{theorem}\label{prop:exp_stable} 
Assume that $g(x)$ is bounded in $\mathbb{R}^n$ and free dynamics $\dot x=f(x)$ is globally exponentially stable. Assume also that $Df(x)$ is bounded in $\mathbb{R}^n$.\footnote{
These assumptions are necessary to have \clred{Lyapunov function} $V(x)$ defined on $\mathbb{R}^n$.} 
Then, for each $x_0\in \mathbb{R}^n$, there exists an optimal control \clred{$\bar u$} for (\ref{eqn:n_sys})-(\ref{eqn:cost}) \clred{and it holds that $x_{\bar u}(t)\to0$ as $t\to\infty$}. 
\end{theorem}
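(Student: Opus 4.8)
The plan is to verify the two hypotheses of Proposition~\ref{prop:minimizer} and then invoke it. For the finiteness hypothesis, I would take the control $u\equiv 0$: the corresponding trajectory is the free solution $x(t,x_0)$, which by global exponential stability satisfies $|x(t,x_0)|\leqslant K|x_0|e^{-\mu t}$ and in particular stays in the compact ball $B=\{|x|\leqslant K|x_0|\}$. Since $h$ is locally Lipschitz with $h(0)=0$, there is a constant $L_B$ with $h(x)\leqslant L_B|x|$ on $B$, whence $J(0)=\int_0^\infty h(x(t,x_0))\,dt\leqslant L_B K|x_0|\int_0^\infty e^{-\mu t}\,dt<\infty$. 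This supplies a control with finite cost.

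The heart of the argument is the boundedness hypothesis, and this is where the assumptions on $g$ and $Df$ enter through a converse Lyapunov theorem. Global exponential stability of $\dot x=f(x)$ together with the global bound on $Df$ yields (cf.\ the footnote) a $C^1$ Lyapunov function $V$ on $\mathbb{R}^n$ with
\[
c_1|x|^2\leqslant V(x)\leqslant c_2|x|^2, \qquad \tfrac{\partial V}{\partial x}f(x)\leqslant -c_3|x|^2, \qquad \Big|\tfrac{\partial V}{\partial x}\Big|\leqslant c_4|x|,
\]
for positive constants $c_1,\dots,c_4$. Along a trajectory of the controlled system I would compute, using the global bound $\|g\|_\infty:=\sup_x\|g(x)\|<\infty$,
\[
\dot V=\tfrac{\partial V}{\partial x}\big(f(x)+g(x)u\big)\leqslant -c_3|x|^2+c_4\,\|g\|_\infty\,|x|\,|u|.
\]
Young's inequality absorbs the cross term into $-\tfrac{c_3}{2}|x|^2$ plus a multiple of $|u|^2$, and using $V\leqslant c_2|x|^2$ this gives a linear differential inequality $\dot V\leqslant -\alpha V+\beta|u(t)|^2$ with $\alpha,\beta>0$. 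The comparison lemma then yields
\[
c_1|x_u(t)|^2\leqslant V(x_u(t))\leqslant V(x_0)+\beta\int_0^\infty|u(s)|^2\,ds,
\]
so $|x_u(t)|$ is bounded on all of $[0,\infty)$ by a constant depending only on $|x_0|$ and $\|u\|_{L^2}$. Hence, as $u$ ranges over a bounded set of $L^2$, the trajectories $x_u$ are uniformly bounded (in particular no finite escape time occurs), verifying the hypothesis of Proposition~\ref{prop:minimizer} and producing an optimal control $\bar u$.

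Finally, for the decay $x_{\bar u}(t)\to0$ I would reuse the inequality $\dot V\leqslant -\alpha V+\beta|\bar u(t)|^2$. Since $\bar u\in L^2$, the function $|\bar u|^2$ lies in $L^1([0,\infty))$, and the comparison lemma gives $V(x_{\bar u}(t))\leqslant e^{-\alpha t}V(x_0)+\beta\int_0^t e^{-\alpha(t-s)}|\bar u(s)|^2\,ds$. The first term vanishes as $t\to\infty$, and a standard splitting of the convolution integral (small tail of $|\bar u|^2$ beyond some $T_0$, exponentially small contribution from $[0,T_0]$) shows the second term tends to $0$ as well; thus $V(x_{\bar u}(t))\to0$ and $|x_{\bar u}(t)|\to0$ by $c_1|x|^2\leqslant V(x)$. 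I expect the main obstacle to be the careful invocation of the converse Lyapunov theorem with the correct global estimates --- especially the gradient bound $\big|\tfrac{\partial V}{\partial x}\big|\leqslant c_4|x|$, which is exactly what boundedness of $Df$ secures and which makes the cross-term estimate valid on all of $\mathbb{R}^n$ rather than merely locally.
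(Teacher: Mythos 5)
Your proof is correct, and it rests on the same foundation as the paper's: verifying the hypotheses of Proposition~\ref{prop:minimizer} by means of the converse Lyapunov function $V$ with the global estimates $c_1|x|^2\leqslant V(x)\leqslant c_2|x|^2$, $DV(x)f(x)\leqslant -c_3|x|^2$, $|DV(x)|\leqslant c_4|x|$, and the resulting differential inequality $\dot V\leqslant -\alpha V+\beta|u(t)|^2$. The differences lie in how you harvest that inequality. The paper integrates its Gronwall estimate over $t$ to obtain $x_u\in L^2((0,\infty);\mathbb{R}^n)$, then estimates $\dot x_u$ to conclude $x_u\in H^1((0,\infty);\mathbb{R}^n)$, and finally derives the required $C^0([0,T])$ bound from $|x_u(t)|\leqslant\sqrt{T}\,\|x_u\|_{H^1}+|x_0|$; you instead read the pointwise bound $c_1|x_u(t)|^2\leqslant V(x_0)+\beta\|u\|_{L^2}^2$ directly off the comparison estimate, which is shorter, gives a bound independent of $T$, and rules out finite escape time at the same stroke. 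For the decay $x_{\bar u}(t)\to 0$, the paper applies its appendix Lemma~\ref{lemma:h_convergence} with $H(x)=|x|^2$ (Barbalat's lemma plus the uniform local existence estimates of the appendix) to the integrability $\int_0^\infty|x_{\bar u}(t)|^2\,dt<\infty$, whereas you split the convolution $\int_0^t e^{-\alpha(t-s)}|\bar u(s)|^2\,ds$ into an exponentially decaying head and a small tail --- an elementary, self-contained alternative. You also verify explicitly the finite-cost hypothesis with $u\equiv 0$, using $h(x)\leqslant L_B|x|$ on the ball containing the free trajectory, a point the paper leaves implicit. Both routes are sound; yours never leaves the Lyapunov estimate and avoids the appendix machinery entirely, while the paper's $L^2$/$H^1$ bookkeeping and Lemma~\ref{lemma:h_convergence} are infrastructure it must develop anyway for the stabilizable case (Theorem~\ref{thm:minimizer_2}), where no globally defined Lyapunov function is available.
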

\begin{proof}
We prove that the assumptions in Proposition~\ref{prop:minimizer} are satisfied. Namely, we show that for any $u\in L^2((0,\infty);\mathbb{R}^m)$, the corresponding solution belongs to $H^1((0,\infty);\mathbb{R}^n)$ and hence, for any bounded set $\mathscr{U}\subset L^2((0,\infty);\mathbb{R}^m)$, the corresponding set $\{\clred{x_u}\,|\,u\in\mathscr{U}\}$ is bounded in $C^0([0,T];\mathbb{R}^n)$ for all $T>0$. \\
\underline{$x\in L^2((0,\infty);\mathbb{R}^n)$:} 
From the global exponential stability of $\dot x=f(x)$ and the boundedness of $Df(x)$, there exist a $C^1$ function $V:\mathbb{R}^n\to\mathbb{R}$\clred{,} positive constants $c_1$, $c_2$, $c_3$ and $c_4$ such that for all $x\in \mathbb{R}^n$
\begin{enumerate}[i)]
    \item 
    $ c_1|x|^2\leqslant V(x)\leqslant c_2|x|^2$ 
    \item
    $DV(x)f(x)\leqslant -c_3|x|^2$
    \item
    $|DV(x)|\leqslant c_4|x|$
\end{enumerate}
(see, e.g., page 180 of \cite{Khalil:92:NS}). \clred{Take an $u\in L^2((0,\infty);\mathbb{R}^m)$ and let $x(t)=x_u(t)$.} 
Using the above inequalities and the boundedness of $\|g(x(t))\|$ on the trajectory, one can derive
\[
\frac{d}{dt}V(x(t))\leqslant -cV(x(t))+c'|u(t)|^2, \ t\geqslant0, 
\]
for some positive constants $c$, $c'$ (which depend on $g$). 
Applying Gronwall's inequality and the quadratic estimates on $V$, it follows that
\[
|x(t)|^2\leqslant c|x_0|^2 e^{-c't}+K\int_0^te^{-c'(t-s)}|u(s)|^2\,ds, \ t\geqslant0
\]
for some positive constants $c$, $c'$ and $K$ (that depend on $U$ and $g$).  
Since 
\[
\int_0^\infty\left( \int_0^t e^{-c'(t-s)}|u(s)|^2\,ds \right)\,dt
=\frac{1}{c'}\left( \int_0^\infty |u(t)|^2\,dt - \lim_{t\to\infty}e^{-c't}\int_0^t e^{c's}|u(s)|^2\,ds \right)
\]
and the second term on the right is 0, we have
\begin{equation}
\int_0^\infty|x(t)|^2\,dt\leqslant \frac{c}{c'}|x_0|^2 +\frac{1}{c'}\|u\|_{L^2((0,\infty);\mathbb{R}^m)}^2.\label{ineq:x_in_L2} 
\end{equation}
\underline{$\dot x\in L^2((0,\infty);\mathbb{R}^n)$:} 
Applying Lemma~\ref{lemma:h_convergence} with $H(x)=|x|^2$, we know that $x(t)$ is bounded for $t\geqslant0$. Therefore, 
\begin{align*}
\int_0^\infty|\dot{x}(t)|^2\,dt &= \int_0^\infty |f(x(t))+g(x(t))u(t)|^2\,dt\\ 
&\leqslant 2\int_0^\infty |f(x(t))|^2+\|g(x(t))\|^2\,|u(t)|^2\,dt\\
&\leqslant 2M^2\int_0^\infty |x(t)|^2\,dt+\sup_{t\geqslant0}\|g(x(t))\|^2
    \int_0^\infty|u(t)|^2\,dt\\
&\leqslant 2M^2\|x\|_{L^2((0,\infty);\mathbb{R}^n)}^2+\sup_{t\geqslant0}\|g(x(t))\|^2 \|u\|_{L^2((0,\infty);\mathbb{R}^m)}^2,
\end{align*}
where $M>0$ is a constant satisfying $|f(x)|\leqslant M|x|$ in a bounded set that contains $\{x(t)\,|\,t\geqslant0\}$. This and (\ref{ineq:x_in_L2}) show that $x\in H^1((0,\infty);\mathbb{R}^n)$ with 
\begin{equation}
    \|x\|_{H^1((0,\infty);\mathbb{R}^n)}\leqslant c_1+c_2\|u\|_{L^2((0,\infty);\mathbb{R}^m)}\label{ineq:x_h1norm_u}
\end{equation}
for some positive constants $c_1$, $c_2$. 

Now, we have for $t\in[0,T]$, where $T>0$ is arbitrary, 
\begin{align*}
|x(t)|&\leqslant \int_0^t|\dot{x}(s)|\,ds +|x_0|\\
&\leqslant \sqrt{t}\|\dot{x}\|_{L^2((0,\infty);\mathbb{R}^n)}+|x_0|\\
&\leqslant \sqrt{T}\|x\|_{H^1((0,\infty);\mathbb{R}^n)}+|x_0|
\leqslant C_T+C'_T\|u\|_{L^2((0,\infty);\mathbb{R}^m)}, \qquad \text{from (\ref{ineq:x_h1norm_u})}
\end{align*}
for some positive constants $C_T$, $C'_T$ that depend on $T$. This proves that the assumptions in Proposition~\ref{prop:minimizer} are satisfied and an optimal control exists. \clred{The last assertion of the theorem follows from the same estimate $\int_0^\infty|x_{\bar u}(t)|^2\,dt<\infty$ and Lemma~\ref{lemma:h_convergence} with $H(x)=|x|^2$.}
\end{proof}

\subsection{Exponentially stabilizable case}
In this subsection, we consider (\ref{eqn:n_sys}) for a more general case where there exists a $C^1$ exponentially stabilizing feedback control $u=k(x)$ with $k(0)=0$ for initial points in \clred{a neighborhood of $x=0$}. 
Let us recall zero-state detectability for nonlinear systems, which is often imposed in OCPs. 
\begin{definition} System $\dot x=f(x)$ and output $y=h(x)$ (or, simply the pair $(f,h)$) is said to be {\em zero-state detectable for a neighborhood $U\subset \mathbb{R}^n$ of the origin} if the following holds. If a solution $x(t)$ with $x(0)\in U$ satisfies $h(x(t))=0$ for $t\geqslant 0$, then $x(t)\to0$ as $t\to\infty$. 
\end{definition}
\begin{proposition}\label{prop:detect}
Suppose that $h$ is a $C^1$ function which is coercive, namely, $h(x)\to\infty$ if $|x|\to\infty$. Take positive constants $R$, $\mu$ such that $h(x)>\mu$ if $|x|>R$. Assume also that the pair $(f,h)$ is zero-state detectable for an open set containing $|x|\leqslant R$. Let $\delta(t)\in L^2((0,\infty);\mathbb{R}^n$). 
Assume finally that solution 
$x_\delta(t)$ for $\dot x=f(x)+\delta(t)$ satisfies $h(x_\delta(t))\to0$ as $t\to\infty$. 
Then, $x_\delta(t)\to0$ as $t\to\infty$. 
\end{proposition}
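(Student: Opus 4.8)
The plan is to argue by a compactness and limiting-trajectory argument that reduces the perturbed system to the unperturbed one, after which zero-state detectability can be applied. First I would confine the trajectory: since $h(x_\delta(t))\to0$, there is a $T_0$ with $h(x_\delta(t))\leqslant\mu$ for $t\geqslant T_0$, and the contrapositive of the hypothesis ``$h(x)>\mu$ if $|x|>R$'' gives $|x_\delta(t)|\leqslant R$ for all $t\geqslant T_0$. Thus the forward orbit is precompact, it lies eventually in the compact ball $K=\{|x|\leqslant R\}$ (which is inside the detectability neighbourhood), and its $\omega$-limit set $\Omega$ is nonempty and compact.

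Next I would produce \emph{unperturbed} limiting solutions. Fixing $t_n\to\infty$, set $y_n(s):=x_\delta(t_n+s)$, which solves $\dot y_n=f(y_n)+\delta(t_n+\cdot)$. Because $\delta\in L^2$, the tails obey $\int_{t_n}^\infty|\delta|^2\,d\tau\to0$, and combining $\sup_K|f|$ with the Cauchy--Schwarz bound $\int_{s_1}^{s_2}|\delta(t_n+\sigma)|\,d\sigma\leqslant\sqrt{s_2-s_1}\,(\int_{t_n}^\infty|\delta|^2\,d\tau)^{1/2}$ yields uniform boundedness and equicontinuity of $\{y_n\}$ on every compact $s$-interval, exactly as in the proof of Proposition~\ref{prop:minimizer}. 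By Ascoli--Arzel\'a and a diagonal argument a subsequence converges uniformly on compacts to some $y$; passing to the limit in $y_n(s)=y_n(0)+\int_0^s f(y_n)\,d\sigma+\int_0^s\delta(t_n+\sigma)\,d\sigma$, the last integral vanishes, so $y$ solves the unperturbed equation $\dot y=f(y)$. Since $h(x_\delta(t_n+s))\to0$ and $h$ is continuous, $h(y(s))\equiv0$, and as $y(0)\in\Omega\subset K$ lies in the detectability open set, zero-state detectability gives $y(s)\to0$ as $s\to\infty$.

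Finally I would upgrade this to $x_\delta(t)\to0$, and this is where I expect the real difficulty. If $|x_\delta(t)|$ actually \emph{converges}, the argument closes at once: its limit equals $|y(s)|$ for every $s$, hence is constant along $y$, and $y(s)\to0$ forces that constant to be $0$. The hard case is genuine oscillation, $\liminf|x_\delta|<\limsup|x_\delta|$. The construction above only shows that every orbit of the unperturbed flow lying in the invariant set $\Omega$ converges to $0$ forward in time, and this alone does \emph{not} force $\Omega=\{0\}$: an invariant homoclinic-type loop through the origin is compatible both with detectability and with $h\equiv0$ on $\Omega$, and even a maximal-norm point of $\Omega$ gives no contradiction. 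To exclude this I would have to use the \emph{finiteness} of $\int_0^\infty|\delta|^2$, not merely the vanishing of its tail: each excursion of $x_\delta$ from a neighbourhood of the origin out to $\{|x|\geqslant\limsup|x_\delta|-\varepsilon\}$ and back cannot be produced by the unperturbed flow near $\Omega$ without a perturbation contribution bounded below, so infinitely many such excursions would violate $\delta\in L^2$. Turning this energy-per-excursion heuristic into a quantitative estimate that forces $\liminf|x_\delta|=\limsup|x_\delta|$, thereby reducing to the convergent case, is the main obstacle.
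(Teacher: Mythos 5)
Your Steps 1--3 are, almost verbatim, the paper's own proof of Proposition~\ref{prop:detect}: the paper confines the trajectory using coercivity of $h$, picks $t_n\to\infty$ with $x_\delta(t_n)\to\bar x$, forms the shifted solutions $\varphi_n(t)=x_\delta(t_n+t)$, and obtains the limiting \emph{unperturbed} solution $\varphi$ through $\bar x$ -- not by redoing Ascoli--Arzel\'a as you do, but by invoking its Carath\'eodory continuous-dependence result (Proposition~\ref{prop:hale_lemma}) -- and then concludes $h(\varphi(t))\equiv 0$, hence $\varphi(t)\to 0$ by zero-state detectability. Up to that point the two arguments are the same.

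The divergence is in the last step, and your diagnosis of it is sharper than the paper's treatment: the paper closes the proof with the single sentence ``therefore $x_\delta(t)\to0$ as $t\to\infty$ since $\varphi_n\to\varphi$ uniformly on $[0,T]$ as $n\to\infty$ for any $T>0$,'' which is exactly the inference you show does not follow -- locally uniform convergence of the shifts to orbits that each tend to $0$ controls nothing on the gaps between the windows $[t_n,t_n+T]$, whose lengths are unbounded. Moreover, the repair you sketch (a perturbation-energy lower bound per excursion) rests on a false premise, because excursions can be powered by the \emph{unperturbed} flow: take $n=2$ and $f$ the Hamiltonian field of $H=\frac{1}{2}p^2-\frac{1}{2}q^2+\frac{1}{3}q^3$ (i.e. $\dot q=p$, $\dot p=q-q^2$), whose level set $\{H=0\}$ contains a homoclinic loop $\Gamma$ at the origin; let $x_\delta$ solve $\dot x=f(x)-H(x)\nabla H(x)$ from a point inside $\Gamma$ with $H=-\eta_0$, $0<\eta_0<\frac{1}{6}$, and set $\delta(t):=-H(x_\delta(t))\nabla H(x_\delta(t))$. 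Since $\nabla H\cdot f=0$, one has $\frac{d}{dt}H=-H\,|\nabla H|^2\geqslant 0$ along $x_\delta$, so $H(x_\delta(t))\uparrow 0$, the trajectory stays strictly inside $\Gamma$ and approaches it, and $\int_0^\infty|\delta|^2\,dt=\int_0^\infty(-H)\frac{dH}{dt}\,dt=\frac{1}{2}\eta_0^2$, so $\delta\in L^2$. Taking $h:=H^2+\chi$ with $\chi\geqslant0$ smooth, coercive, and vanishing exactly on a closed tubular neighbourhood of $\Gamma$, every hypothesis of Proposition~\ref{prop:detect} is met: $h$ is $C^1$, coercive, $h(x_\delta(t))\to 0$, and $(f,h)$ is zero-state detectable, since any solution of $\dot x=f(x)$ remaining in $h^{-1}(0)$ for all forward time runs along $\Gamma$ or the incoming stable branch and tends to $0$ (points on the outgoing branch exit the tube, so $h$ becomes positive). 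Yet $x_\delta(t)\not\to 0$: along the modified field the sign of $H$ is preserved, so its stable manifold at the saddle lies in $\{H=0\}$, and a trajectory with $H<0$ for all time can never remain near the origin; its $\omega$-limit set is the whole loop. So the obstruction you identified is genuine and cannot be argued away: with these hypotheses the final step is unprovable (the statement itself fails), the paper's proof of Proposition~\ref{prop:detect} -- and therefore the proofs of Theorem~\ref{thm:minimizer_2} and Proposition~\ref{prop:minimizer_2_appl}, which rely on it -- inherits the same gap, and a correct version needs a stronger hypothesis, e.g. that $\{0\}$ is the only $f$-invariant subset of $h^{-1}(0)\cap\{|x|\leqslant R\}$ \emph{and} is asymptotically stable relative to $h^{-1}(0)$, a condition a homoclinic loop violates.
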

\begin{proof}
From the coercivity of $h$, $\{x_\delta(t)|\,t\geqslant 0\}$ is bounded in $\mathbb{R}^n$. Using Bolzano-Weierstrass Theorem, there exists a subsequence $\{x_\delta(t_n)\}$, $t_n\to\infty$ ($n\to\infty$), such that $x_\delta(t_n)\to\bar x$ as $t_n\to\infty$. Define $\delta_n(t):=\delta(t_n+t)$, $f_n(t,x):=f(x)+\delta_n(t)$ and $\xi_n:=x_\delta(t_n)$. Note that $\xi_n\to\bar x$ as $n\to\infty$. We consider an initial value problem
\[
\dot x =f_n(t,x),\quad x(0)=\xi_n.
\]
Let $\varphi_n(t)$ denote the solution of the above problem. One then notices that $\varphi_n(t)=x_\delta(t_n+t)$ and therefore $\varphi_n(t)$ is bounded for $t\geqslant0$ \clred{
uniformly in $n$}. 
Also, let $\varphi(t)$ be the solution on $[0,t_1]$ for 
\[
\dot x=f(x), \quad x(0)=\bar x.
\]
Write $f_0(t,x):=f(x)$. Then, for any bounded set $\bar D\in\mathbb{R}^n$, there exists a constant $M>0$ such that for any $x$, $y\in \bar D$ it follows that
\begin{gather*}
|f_n(t,x)-f_0(t,x)|\leqslant |\delta_n(t)|,\\
\int_0^t|\delta_n(s)|\,ds=\int_{t_n}^{t+t_n}|\delta(s)|\,ds\to0, \text{ as }n\to \infty \text{ for }t>0,\\
\int_{t_1}^{t_2}|\delta_n(t)|\,dt \leqslant \sqrt{t_2-t_1}\| \delta \|_{L^2((0,\infty);\mathbb{R}^n)} \text{ for }0\leqslant t_1\leqslant t_2,\\
|f_n(t,x)-f_n(t,y)|=|f(x)-f(y)|\leqslant M|x-y|. 
\end{gather*}
Now all the assumptions in Proposition~\ref{prop:hale_lemma} are satisfied. Therefore, $\varphi_n\to\varphi$ uniformly on $[0,t_1]$ as $n\to\infty$. However, from the boundedness of $\varphi_n$, $\varphi$ is defined on $[0,\infty)$ and also bounded for $t\geqslant0$. Take a constant $M_h>0$ \clred{that is independent of $n$} such that in a bounded set that contains $\varphi_n(t)$ and $\varphi(t)$ for $t\geqslant0$, 
\[
|h(x)-h(y)|\leqslant M_h|x-y|
\]
holds for all $x$, $y$ in the bounded set. Then we have 
\[
0\leqslant h(\varphi(t))\leqslant M_h|\varphi_n(t)-\varphi(t)|+h(x_\delta(t_n+t))
\]
for $t\geqslant0$. Taking the limit $n\to\infty$, we obtain 
\[
h(\varphi(t))=0 \quad \text{for}\quad t\geqslant0.
\]
We have then $|\varphi(t)|\leqslant R$ for $t\geqslant0$. From the detectability of $(f,h)$ for the open set containing $|x|\leqslant R$, we have $\varphi(t)\to0$ as $t\to\infty$ and therefore $x_\delta(t)\to0$ as $t\to\infty$ since $\varphi_n\to\varphi$ uniformly on $[0,T]$ as $n\to\infty$ for any $T>0$. 
\end{proof}
%
\begin{theorem} \label{thm:minimizer_2} 
Assume that system (\ref{eqn:n_sys}) is exponentially stabilizable by a $C^1$ feedback control for \clred{an open set} $U\subset\mathbb{R}^n$ and that $h$ is a $C^1$ function of $x\in\mathbb{R}^n$. Assume also that the pair $(f,h)$ is zero-state detectable for an open set containing $|x|\leqslant\rho$ for some $\rho>0$. 
Assume finally that there exist positive constants $p$, $c_f$, $c_g$ and a constant $0\leqslant\theta<1$ such that 
\begin{subequations}
\begin{align}
|f(x)|&\leqslant c_f|x|^{p+\theta} \label{ineq:groth_f}\\
\|g(x)\|&\leqslant c_g|x|^{p/2+\theta}\label{ineq:groth_g}
\end{align}
\end{subequations}
for sufficiently large $x\in\mathbb{R}^n$ and that there exists a positive constants $c_h$ such that 
\begin{equation}
h(x) \geqslant c_h|x|^p\label{ineq:groth_h}
\end{equation}
for $x$ with $|x|>\rho$. Then, for $x_0\in U$, OCP (\ref{eqn:n_sys})-(\ref{eqn:cost}) has an optimal control $\bar u\in L^2((0,\infty);\mathbb{R}^m)$ \clred{and it holds that $x_{\bar u}(t)\to0$ as $t\to\infty$}. 
\end{theorem}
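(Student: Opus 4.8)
The plan is to verify the hypotheses of Proposition~\ref{prop:minimizer} along a minimizing sequence, the essential new ingredient being an a~priori bound on the trajectories supplied by the growth conditions (\ref{ineq:groth_f})--(\ref{ineq:groth_h}) rather than by an unconditional boundedness assumption. First I would produce one admissible control of finite cost: since (\ref{eqn:n_sys}) is exponentially stabilizable on $U$ by a $C^1$ feedback $k$ with $k(0)=0$, for $x_0\in U$ the closed-loop trajectory decays like $|x(t)|\leqslant K|x_0|e^{-\mu t}$ and stays in a fixed ball, on which the $C^1$ maps $k$ and $h$ are Lipschitz with $k(0)=h(0)=0$; hence $|u(t)|=|k(x(t))|$ and $h(x(t))$ are each dominated by a multiple of $e^{-\mu t}$, so the open-loop signal $u(t)=k(x(t))$ lies in $L^2((0,\infty);\mathbb{R}^m)$ with $J(u)<\infty$, whence $\inf J<\infty$.

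Next I would take a minimizing sequence $\{u_m\}$ with $J(u_m)\leqslant C$, so $\|u_m\|_{L^2}^2\leqslant 2C$ and $\int_0^\infty h(x_m)\,dt\leqslant C$, where $x_m=x_{u_m}$; reading the second bound through (\ref{ineq:groth_h}) gives $\int_{\{|x_m|>\rho\}}|x_m|^p\,dt\leqslant C/c_h$ uniformly in $m$ (enlarging $\rho$ if needed so that (\ref{ineq:groth_f})--(\ref{ineq:groth_g}) hold for $|x|>\rho$). The heart of the argument is an excursion estimate. Fix $m$, set $M_T=\sup_{[0,T]}|x_m|$, and suppose $M_T>\rho$; pick $t^\ast\in[0,T]$ attaining the supremum and let $t_1$ be the last time before $t^\ast$ with $|x_m|=\rho$ (an obvious modification covers $|x_0|>\rho$). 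On $[t_1,t^\ast]$ one has $|x_m|\geqslant\rho$, so combining $\frac{d}{dt}|x_m|\leqslant |f(x_m)|+\|g(x_m)\|\,|u_m|$ with (\ref{ineq:groth_f}), (\ref{ineq:groth_g}), the bounds $|x_m|^{p+\theta}\leqslant M_T^\theta|x_m|^p$ and $|x_m|^{p/2+\theta}\leqslant M_T^\theta|x_m|^{p/2}$, Cauchy--Schwarz on the control term, and the two integral estimates above, I obtain
\[
M_T-\rho\leqslant c_f M_T^\theta\,\frac{C}{c_h}+c_g M_T^\theta\Big(\frac{C}{c_h}\Big)^{1/2}(2C)^{1/2}=K\,M_T^\theta,
\]
with $K$ independent of $m$ and $T$. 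Since $\theta<1$, the map $M\mapsto M-KM^\theta$ is coercive, so this forces $M_T\leqslant M_\ast$ for a constant $M_\ast$ depending only on $C,\rho,c_f,c_g,c_h,\theta$; in particular no trajectory of the sequence has finite escape time and $\sup_m\sup_{t\geqslant0}|x_m(t)|\leqslant M_\ast$. This uniform bound is exactly the ingredient the hypothesis of Proposition~\ref{prop:minimizer} would otherwise furnish, and with it the equicontinuity/Ascoli--Arzel\`a step and the weak-lower-semicontinuity step in the proof of Proposition~\ref{prop:minimizer} apply verbatim, producing an optimal $\bar u$ with $\bar x=x_{\bar u}$ and $J(\bar u)=\inf J$.

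For the convergence $x_{\bar u}(t)\to0$ I would invoke Proposition~\ref{prop:detect}. As $\bar x$ is bounded, $\|g(\bar x(t))\|$ is bounded, so $\delta(t):=g(\bar x(t))\bar u(t)\in L^2((0,\infty);\mathbb{R}^n)$ and $\bar x$ solves $\dot x=f(x)+\delta(t)$. Since $f(\bar x)$ is bounded on the trajectory as well, $\dot{\bar x}\in L^2$ and $\bar x$ is uniformly continuous; as $h$ is $C^1$ (hence Lipschitz on the ball of radius $M_\ast$), $t\mapsto h(\bar x(t))$ is uniformly continuous, and because $\int_0^\infty h(\bar x)\,dt<\infty$ a Barbalat-type argument yields $h(\bar x(t))\to0$. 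The coercivity $h(x)\geqslant c_h|x|^p$ for $|x|>\rho$ together with the assumed zero-state detectability on an open set containing $|x|\leqslant\rho$ are precisely the hypotheses of Proposition~\ref{prop:detect}, which then gives $x_{\bar u}(t)=\bar x(t)\to0$.

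The step I expect to be the genuine obstacle is the a~priori trajectory bound: without it the superlinear growth in (\ref{ineq:groth_f})--(\ref{ineq:groth_g}) permits finite escape time, so the boundedness hypothesis of Proposition~\ref{prop:minimizer} cannot simply be assumed and must be recovered from finiteness of the cost. The condition $\theta<1$ is essential here, being exactly what renders $M-KM^\theta$ coercive so that the excursion inequality closes; at $\theta=1$ the bound degenerates to a linear inequality and the argument could break down.
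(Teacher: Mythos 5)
Your proposal is correct and follows essentially the same route as the paper's proof: the stabilizing feedback yields a control of finite cost, a minimizing sequence is extracted, the growth conditions together with Cauchy--Schwarz and the cost bound give a uniform sup-norm bound on the trajectories (the step where $\theta<1$ is decisive), and then the compactness/weak-lower-semicontinuity machinery of Proposition~\ref{prop:minimizer} and, for $x_{\bar u}(t)\to0$, Proposition~\ref{prop:detect} combined with a Barbalat-type argument (the paper packages this as Lemma~\ref{lemma:h_convergence}) finish the argument. The only differences are cosmetic---the paper proves the uniform bound by contradiction, comparing its length estimate $O(L_m^{\theta})$ over $I_m=\{t\geqslant0\,|\,|x_m(t)|\geqslant L_m/2\}$ against the radial travel $O(L_m)$ needed to connect the spheres $|x|=L_m/2$ and $|x|=L_m$, whereas you derive the quantitative inequality $M_T-\rho\leqslant K M_T^{\theta}$ directly---and your passing claim that $\dot{\bar x}\in L^2((0,\infty);\mathbb{R}^n)$ is inaccurate but harmless, since the uniform continuity of $\bar x$ that Barbalat requires already follows from $\dot{\bar x}$ being the sum of a bounded function and an $L^2$ function.
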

\begin{proof}
Let $k(x)$ be a $C^1$ exponentially stabilizing feedback and let $u_e(t)=k(x(t))$. Then $J_e:=J(u_e)<+\infty$ since the closed loop solution satisfies the exponential decay and around the origin we have estimates on $k$ and $h$; $|k(x)|\leqslant M_k|x|$, $h(x)\leqslant M_h|x|$ for some constants $M_h,M_k>0$. There exists a minimizing sequence $\{u_m\}\subset L^2((0,\infty);\mathbb{R}^m)$, which is bounded in $L^2((0,\infty);\mathbb{R}^m)$ as in the proof of Proposition~\ref{prop:minimizer}, such that
\[
\lim_{m\to\infty}J(u_m)=\inf_{L^2((0,\infty);\mathbb{R}^m)}J, 
\quad J(u_m) <J_e\  (m\in\mathbb{N}). 
\]
Let \clred{$x_m=x_{u_m}$}. Then, we have $\int_0^\infty h(x_m(t))\,dt<\infty$. From Lemma~\ref{lemma:h_convergence} with $H(x)=h(x)$, $h(x_m(t))\to0$ as $t\to\infty$ and $x_m(t)$ is bounded for $t\geqslant0$.  
With the correspondence 
\[
\delta(t)\longleftrightarrow g(x_m(t))u_m(t), \quad 
x_\delta \longleftrightarrow x_m,\quad R\longleftrightarrow\rho,
\]
in Proposition~\ref{prop:detect}, we have $x_m\in C^0([0,\infty);\mathbb{R}^n)$, $x_m(t)\to0$ as $t\to\infty$. 

We next prove that $x_m(t)$ is uniformly bounded. Let $L_m:= \sup_{t\geqslant0}|x_m(t)|$ and we prove that $\sup_{m\in\mathbb{N}}L_m<\infty$. Define $I_m\subset\mathbb{R}$ by 
\[
I_m:=\{t\geqslant0\,|\, |x_m(t)|\geqslant L_m/2\}. 
\]
Assume that $\sup_{m\in\mathbb{N}}L_m=\infty$ and take $\{m_k\}\subset \mathbb{N}$ such that $L_{m_k}\to\infty$ as $m_k\to\infty$. Then, for $m_k$ sufficiently large, 
\begin{align*}
J_e&\geqslant \int_0^\infty h(x_{m_k}(t))\,dt\\
&\geqslant \int_{I_{m_k}}h(x_{m_k}(t))\,dt\\
&\geqslant c_h\int_{I_{m_k}} |x_{m_k}(t)|^p\,dt\qquad \text{by (\ref{ineq:groth_h})}\\
&\geqslant c_h\left(\frac{L_{m_k}}{2}\right)^p|I_{m_k}|,
\end{align*}
where $|\cdot|$ denotes length (Lebesgue measure) of interval. Thus we have 
\begin{equation}
|I_{m_k}|\leqslant C {L_{m_k}}^{-p},\label{ineq:I_m_length}
\end{equation}
where $C>0$ is independent of $m_k$. Next we compute the length of trajectory connecting spheres $|x|=L_{m_k}$ and $|x|=L_{m_k}/2$, which is 
\begin{align*}
\int_{I_{m_k}}|\dot{x}_{m_k}(t)|\,dt &\leqslant 
    \int_{I_{m_k}}|f(x_{m_k}(t))|+\|g(x_{m_k})\|\,|u_{m_k}(t)|\,dt\\
    &\leqslant |I_{m_k}|\sup_{L_{m_k}/2\leqslant |x|\leqslant L_{m_k}}|f(x)|
    +\sup_{L_{m_k}/2\leqslant |x|\leqslant L_{m_k}}\|g(x)\|\int_{I_{m_k}}|u_{m_k}(t)|\,dt\\
&\leqslant |I_{m_k}|\sup_{L_{m_k}/2\leqslant |x|\leqslant L_{m_k}}|f(x)|\\
&\qquad    +\sqrt{|I_{m_k}|}\sup_{L_{m_k}/2\leqslant |x|\leqslant L_{m_k}}\|g(x)\|\sup_{m\in\mathbb{N}}\|u_m\|_{L^2((0,\infty);\mathbb{R}^m)}\\
&\leqslant c_1|I_{m_k}|{L_{m_k}}^{p+\theta}+c_2\sqrt{|I_{m_k}|}{L_{m_k}}^{p/2+\theta},
\qquad \text{by (\ref{ineq:groth_f}) and (\ref{ineq:groth_g})}
\end{align*}
where $c_1$, $c_2$ are positive constants independent of $m_k$. Then, from (\ref{ineq:I_m_length}) it follows that 
\[
\int_{I_{m_k}}|\dot{x}_{m_k}(t)|\,dt\leqslant C{L_{m_k}}^{\theta}, 
\]
where $C>0$ is a constant independent of $m_k$. However, the left side of above grows, at least, as $O(|L_{m_k}|)$, which is a contradiction. 

The rest of the proof is the same as Proposition~\ref{prop:minimizer}. \clred{The last assertion follows using Proposition~\ref{prop:detect} with $\delta(t)=g(x_{\bar u}(t))\bar{u}(t)$.} 
\end{proof}

\begin{remark}
Although exponential stability of free dynamics $\dot x=f(x)$ implies exponential stabilizability, Theorem~\ref{thm:minimizer_2} does not include Theorem~\ref{prop:exp_stable} since in Theorem~\ref{prop:exp_stable} $h\equiv 0$ is allowed. 
\end{remark}
\clred{
\begin{remark}
Theorem~\ref{thm:minimizer_2} cannot handle the case $h(x)=|Cx|^2$ with a constant matrix $C\in\mathbb{R}^{r\times n}$. This is due to the coercivity of $h$ used in the proof. 
\end{remark}
}
\section{Stable manifold analysis of associated Hamiltonian system}\label{sctn:stabl_manifold}
In this section, we additionally assume that $h(x)$ is $C^2$ and $Dh(0)=0$.
The second problem treated in the paper is to give estimates on a stable manifold of the associated Hamiltonian system for OCP (\ref{eqn:n_sys})-(\ref{eqn:cost});  
\begin{subequations}\label{eqn:hsys_all}
\begin{align}
    \dot{x}&=f(x)-g(x)g(x)^\top p \label{eqn:hsys1}\\
    \dot{p}&=-Df(x)^\top p +\frac{1}{2}D(p^\top g(x)g(x)^\top p)^\top -Dh(x)^\top \label{eqn:hsys2}.
\end{align}
\end{subequations}
\clred{The Hamiltonian system (\ref{eqn:hsys_all}) appears in optimal control theory in two ways. The first is from Pntryagin's minimum principle, where (\ref{eqn:hsys_all}) takes the form of 
\begin{gather*}
\dot x=\frac{\partial H_D}{\partial p}(x(t),u^\ast (t), p(t))^\top,\quad 
\dot p=-\frac{\partial H_D}{\partial x}(x(t),u^\ast (t), p(t))^\top,\\
u^\ast(t) = \min_{u\in\mathbb{R}^m}H_D(x(t),u, p(t)),
\end{gather*}
for $H_D(x,u,p)=p^\top f(x)+p^\top g(x)u+\frac{1}{2}|u|^2+h(x)$. The minimization over $u\in\mathbb{R}^m$ can be expressed explicitly and (\ref{eqn:hsys_all}) is obtained. The second is from the method of characteristics for the HJBE  
\[
H(x,\partial V/\partial x)=0
\]
obtained by the Dynamic Programming, where $H(x,p)=\min_{u\in\mathbb{R}^n}H_D(x,u,p)$. 
}

Let us explain the motivation of the second problem in the paper using a simple example 
\[
\dot x=-x+x^2+u,\quad J=\int_0^\infty u(t)^2/2\,dt. 
\]
This system is globally exponentially stabilizable by $u=-x^2$ and Hamiltonian system is 
\begin{equation}
\dot x= -x+x^2-p,\quad \dot p= p-2xp.\label{eqn:ex_hamsys} 
\end{equation}
The optimal control is $u=0$ for all $x(0)\in\mathbb{R}$ and the stable manifold for the Hamiltonian system (\ref{eqn:ex_hamsys}) exists only in $x<1$. In (\ref{eqn:ex_hamsys}), there are two \clred{equilibria}; $(x,p)=(0,0), (1,0)$ and heteroclinic orbits connecting them exist. To guarantee the global existence of stable manifold, one realizes that the detectability condition is necessary. 

\clred{It should be also noted that the asymptotic behavior of $p(t)$ is related with the transversality condition in the minimum principle, but, in infinite horizon OCPs, it does not hold in general that $p(\infty)=0$. For this issue, we refer to \cite{Halkin:74:econometrica}. The paper shows a counter example using an example with singular costate (which does not happen in our setting). In the following, we show that in (\ref{eqn:hsys_all}) $p(t)$ satisfies $p(\infty)=0$.}

\begin{proposition}\label{prop:p_exists_stable} Suppose that $h(x)$ is $C^2$ and $Dh(0)=0$. 
\begin{enumerate}[(i)]
    \item 
Assume that hypotheses in Theorem~\ref{prop:exp_stable} are satisfied. Then, for any $x_0\in\mathbb{R}^n$, Hamiltonian system (\ref{eqn:hsys_all}) admits a unique solution $(x(t),p(t))$ defined on $[0,\infty)$ satisfying $x(0)=x_0$ and $(x(t),p(t))\to0$ as $t\to\infty$.
\item
\clred{Assume that all the hypotheses in Theorem~\ref{thm:minimizer_2} are satisfied. Assume additionally that the linear part of $(h(x), f(x)))$ is detectable.} \clred{Then} for any $x_0\in U$, \clred{where $U$ is the open set for the exponential stabilization}, Hamiltonian system (\ref{eqn:hsys_all}) admits a unique solution $(x(t),p(t))$ defined on $[0,\infty)$ satisfying $x(0)=x_0$ and $(x(t),p(t))\to0$ as $t\to\infty$.
\end{enumerate}
\end{proposition}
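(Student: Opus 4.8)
My plan is to construct the costate as a limit of finite-horizon costates and then to read off the asymptotics $p(\infty)=0$ from the hyperbolic structure of the Hamiltonian system \eqref{eqn:hsys_all} at the origin; parts (i) and (ii) proceed in parallel, differing only in which existence theorem supplies the state estimates and in the source of hyperbolicity. For each $T>0$ I would first consider the finite-horizon problem of minimizing $J_T(u)=\int_0^T \tfrac12|u|^2+h(x)\,dt$ over $L^2((0,T);\mathbb{R}^m)$ with $x(0)=x_0$ and free terminal state. A minimizer $u_T$ exists by the argument of Proposition~\ref{prop:minimizer}, the required compactness on $[0,T]$ being furnished by Theorem~\ref{prop:exp_stable} in case (i) and by the uniform state bound obtained inside the proof of Theorem~\ref{thm:minimizer_2} in case (ii). Since the dynamics are $C^2$ and $u_T\in L^2$, the classical finite-horizon Pontryagin principle then produces an absolutely continuous costate $p_T$ with $(x_T,p_T)$ solving \eqref{eqn:hsys1}--\eqref{eqn:hsys2}, $x_T(0)=x_0$, the free-endpoint transversality $p_T(T)=0$, and the feedback relation $u_T=-g(x_T)^\top p_T$.

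Next I would let $T\to\infty$. Because $\inf J_T\le J(\bar u)<\infty$ uniformly in $T$, the controls $u_T$ are bounded in $L^2$, and the a priori estimates already used in Theorems~\ref{prop:exp_stable} and \ref{thm:minimizer_2} bound $\sup_t|x_T|$ and make $\{x_T\}$ equicontinuous on every compact interval. Arzel\'a--Ascoli, Banach--Alaoglu and a diagonal argument then give, along a subsequence, $x_T\to x$ locally uniformly and $u_T\rightharpoonup u$ weakly in $L^2$; the limit identification of Step~1 of Proposition~\ref{prop:minimizer} shows $x=x_u$, and weak lower semicontinuity together with $\inf J_T\uparrow\inf J$ forces $u=\bar u$ and $x=\bar x$, the infinite-horizon optimum. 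The genuinely delicate point is a bound on $p_T$ on compact time intervals that is uniform in $T$: the adjoint equation \eqref{eqn:hsys2} is quadratic in $p$, so a direct Gronwall estimate deteriorates as the horizon grows. I would instead read $p_T(t)=\nabla_x V_T(t,x_T(t))$ off the finite-horizon value functions $V_T$ and derive the bound from a uniform local Lipschitz (semiconcavity) estimate on $V_T$, itself a consequence of the finiteness of the infinite-horizon cost and the stability estimates in hand. A further application of Arzel\'a--Ascoli yields $p_T\to p$ locally uniformly, and continuous dependence for the $C^1$ Hamiltonian vector field lets me pass to the limit, so that $(x,p)=(\bar x,p)$ solves \eqref{eqn:hsys_all} on $[0,\infty)$ with $x(0)=x_0$ and $\bar u=-g(\bar x)^\top p$.

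It remains to prove $p(\infty)=0$ and uniqueness, which I would extract from the local geometry of \eqref{eqn:hsys_all} at $(x,p)=(0,0)$. Linearizing and using $Dh(0)=0$ with $h\ge0$ (hence $Q:=D^2h(0)\succeq0$), the Jacobian is the Hamiltonian matrix $\left(\begin{smallmatrix} A & -BB^\top \\ -Q & -A^\top \end{smallmatrix}\right)$ with $A=Df(0)$ and $B=g(0)$. In case (i) $A$ is Hurwitz, and in case (ii) $(A,B)$ is stabilizable and the linear part of $(h,f)$ is detectable; in either case the classical linear--quadratic dichotomy forbids eigenvalues on the imaginary axis, so $(0,0)$ is hyperbolic and carries an $n$-dimensional local stable manifold $W^s_{\mathrm{loc}}$ that is the graph of a $C^1$ map $p=\pi(x)$ with $\pi(0)=0$. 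Since $\bar x(t)\to0$, the trajectory $(\bar x(t),p(t))$ enters any prescribed neighborhood of the origin; as there are no center directions it must lie on $W^s_{\mathrm{loc}}$ for large $t$, so $p(t)=\pi(\bar x(t))\to\pi(0)=0$. Backward uniqueness of the $C^1$ flow together with the graph property of $W^s_{\mathrm{loc}}$---equivalently, that requiring the trajectory to decay singles out $p(0)$---then yields uniqueness of $(x,p)$.

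The uniform-in-$T$ costate bound of the second paragraph is what I expect to be the main obstacle, since the quadratic $p$-dependence of \eqref{eqn:hsys2} precludes elementary growth estimates; the argument must therefore use optimality, either through uniform regularity of the finite-horizon value functions or, near the origin, through the trapping by $W^s_{\mathrm{loc}}$ and conservation of the Hamiltonian $p^\top f(x)-\tfrac12|g(x)^\top p|^2+h(x)$ along \eqref{eqn:hsys_all}.
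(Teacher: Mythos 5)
Your strategy (finite-horizon problems with free endpoint, transversality $p_T(T)=0$, then $T\to\infty$) is a genuinely different route from the paper's, but as it stands it has two gaps, one of which you flag yourself and one of which is a logical error. The uniform-in-$T$ bound on $p_T$ over compact time intervals is not a deferrable technicality: it is the entire difficulty of the proposition, and neither of your suggested remedies is carried out or obviously carries through. The identity $p_T(t)=\nabla_x V_T(t,x_T(t))$ presupposes differentiability of $V_T$ along the optimal trajectory, and a Lipschitz or semiconcavity estimate on $V_T$ that is \emph{uniform in the horizon} is essentially a turnpike-type estimate --- the sort of statement this proposition is meant to feed into, not something that follows from finiteness of the infinite-horizon cost alone; without stability of the optimally controlled flow, the natural Gronwall bound on such a Lipschitz constant degrades with $T$. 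Conservation of the Hamiltonian does not bound $p$ either: when $m<n$ the level set $\{p^\top f(x)-\tfrac12|g(x)^\top p|^2+h(x)=\mathrm{const}\}$ is unbounded in $p$ (take directions in $\ker g(x)^\top$). The paper sidesteps all of this: it constructs $p$ on $[t_1,\infty)$ directly from the local stable manifold of \eqref{eqn:hsys_all} (built via the Riccati-based symplectic block-diagonalization), identifies the $x$-component with the optimal trajectory $x^\ast$ through Lukes' local sufficiency plus the principle of optimality, and then extends $p$ \emph{backward} to $[0,t_1]$ by rewriting \eqref{eqn:hsys2} along the now-fixed pair $(x^\ast,u^\ast)$ as a linear ODE in $p$; linear equations cannot blow up, so no a priori bound on the costate is ever required.

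The second gap is in your asymptotics. From $\bar x(t)\to0$ you infer that ``the trajectory $(\bar x(t),p(t))$ enters any prescribed neighborhood of the origin''; this is a non sequitur, because at that stage nothing controls $p(t)$ --- ruling out costates that drift or blow up while the state converges is precisely what is to be proved (this is the issue behind the Halkin-type counterexamples the paper cites). Moreover, even if the trajectory did visit a small neighborhood of $(0,0)$, that alone does not place it on $W^s_{\mathrm{loc}}$: hyperbolicity says that orbits which \emph{remain} in the neighborhood for all future times lie on the local stable manifold, while orbits off it may enter and then leave. Your trapping step therefore uses the converse of what the stable manifold theorem provides, and your uniqueness sketch inherits the same defect (it becomes sound only once $(x(t),p(t))\to(0,0)$ is known, which is how the paper exploits the graph property $\eta=\theta(\xi)$). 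If you want to keep the finite-horizon scheme, the cleanest repair is to borrow the paper's two devices: identify $p$ with the stable-manifold lift of $x^\ast(t_1)$ at a single large time $t_1$ (using the Implicit Function Theorem on $\xi\mapsto\xi+P_2\theta(\xi)$ and the principle of optimality), which yields $p(t)\to0$ by construction, and obtain global backward existence from the linearity of the costate equation along $(x^\ast,u^\ast)$ rather than from any uniform limit procedure.
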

\begin{proof}
Let us write 
\begin{gather*}
f(x)=Ax+\varphi(x), \ \varphi(x)=O(|x|^2), \quad g(x)=B+\tilde{g}(x),\ \tilde{g}(x)=O(|x|)\\
h(x)=\frac{1}{2}|Cx|^2+\tilde{h}(x),\ \tilde{h}(x)= O(|x|^3),\ |x|\to0, 
\end{gather*}
where $\varphi$, $\tilde g(x)$ and $\tilde h(x)$ are higher order $C^2$ maps. 
\clred{The proofs for (i) and (ii) are almost the same since $A$ is a Hurwitz matrix in (i) and the exponential stabilizability by $C^1$ feedback in (ii) implies that $(A,B)$ is stabilizable. One additionally needs the detectability of $(C,A)$ for (ii).}

The Hamiltonian system (\ref{eqn:hsys_all}) can be written as 
\begin{gather}
\frac{d}{dt}\bmat{x\\p}=\mathrm{Ham}\bmat{x\\p}
    +\bmat{\varphi(x) -\Phi(x)p\\ -D\varphi(x)^\top p +\frac{1}{2}D(p^\top g(x)g(x)^\top p)^\top -D\tilde{h}(x)^\top}\label{eqn:ham_all}\\
\intertext{where}
\mathrm{Ham}=\bmat{A& -BB^\top\\-C^\top C&-A^\top}, \quad 
\Phi(x)=B\tilde{g}(x)^\top +\tilde{g}(x)B^\top+\tilde{g}(x)\tilde{g}(x)^\top.\nonumber
\end{gather}
\clred{
(\ref{eqn:ham_all}) is written as
\[
\frac{d}{dt}\bmat{x\\p}=\mathrm{Ham}\bmat{x\\p}
    +\bmat{\gamma_1(x,p)\\\gamma_2(x,p)}
\]
with appropriately defined $\gamma_1(x,p)$, $\gamma_2(x,p)$. Note that $\gamma_j(x,p)=O((|x|+|p|)^2)$, $|x|+|p|\to0$, $j=1,2$.
}

\clred{Since $(A,B)$ is stabilizable and $(C,A)$ is detectable}, the following Riccati equation has a solution $P_1\geqslant0$
\[
PA+A^\top P-PBB^\top P+C^\top C=0
\]
with $\mathrm{Re}\,\lambda(A-BB^\top P_1)<0$. Also, take a solution $P_2\leqslant0$ for the following Lyapunov equation
\[
P(A-BB^\top P_1)^\top +(A-BB^\top P_1)P=BB^\top.
\]
Using a symplectic transformation (see, e.g., \cite{Lukes:69:sicon,Sakamoto:02:sicon} for detail)
\[
L=\bmat{I&P_2\\P_1& I+P_1P_2},\quad L^{-1}=\bmat{I+P_2P_1&-P_2\\-P_1&I},
\]
the linear part $\mathrm{Ham}$ is block-diagonalized 
\[
L^{-1}\mathrm{Ham}L=\bmat{A-BB^\top P_1&0\\0&-(A-BB^\top P_1)^\top}.
\]
Let us introduce new coordinates $\xi$, $\eta$ by 
\begin{equation}
\bmat{x\\p}=L\bmat{\xi\\\eta}=\bmat{\xi+P_2\eta\\P_1\xi+(P_1P_2+I)\eta}
=\bmat{\xi+P_2\eta\\P_1 x+ \eta}.\label{eqn:xi_eta}
\end{equation}
The Hamiltonian system is then written, in the coordinates $(\xi, \eta)$, as
\begin{equation}
\frac{d}{dt}\bmat{\xi\\ \eta}=\bmat{(A-BB^\top P_1)&0\\0&-(A-BB^\top P_1)^\top}\bmat{\xi\\\eta}
    +\bmat{\nu_1(\xi,\eta)\\\nu_2(\xi,\eta)},\label{eqn:ham_all_xi_eta}
\end{equation}
where $\nu_j(\xi,\eta)=O((|\xi|+|\eta|)^2)$, $|\xi|+|\eta|\to0$, $j=1,2$. It is known (see, e.g., Page~107 of \cite{Chow:82_96:MBT}) that in (\ref{eqn:ham_all_xi_eta}), there exists a unique $C^1$ stable manifold $\eta=\theta(\xi)$ in a neighborhood of $(\xi,\eta)=(0,0)$ satisfying $\theta(0)=0$, $D\theta(0)=0$ such that the solution $\xi(t)$, $\eta(t)$ satisfy $\xi(t)=\theta(\eta(t))$ for $t\geqslant0$ and $\xi(t)$, $\eta(t)\to0$ provided that $\eta(0)=\theta(\xi(0))$%
\footnote{For a dynamical system whose linearization at the equilibrium (denoted $0$) has no eigenvalues on the imaginary axis, a stable manifold exists around the equilibrium, which is defined as
\[
\bigcup_{t\geqslant0}\varphi_{-t}(W_{\mathrm{loc}}^s(0)),
\]
where $\varphi_t(z)$ is the flow of the dynamical system starting from $z$ at $t=0$ and $W_{\mathrm{loc}}^s(0)$ is the set of points near the equilibrium from which the flow is bounded for $t\geqslant0$ (local stable manifold).
}. 

\clred{Take an $x_0$ for which an optimal control $u^\ast$ exists by either Theorem~\ref{prop:exp_stable} or Theorem~\ref{thm:minimizer_2} and let $x^\ast$ be the optimal trajectory, which satisfies $x^\ast(t)\to0$ as $t\to\infty$. For arbitrary large $t_1\geqslant0$, there exists a $\xi_1\in\mathbb{R}^n$ such that $x^\ast (t_1)=\xi_1+P_2\theta(\xi_1)$. This is shown from  
\[
\left.\frac{\partial}{\partial \xi}(\xi+P_2\theta(\xi))\right|_{\xi=0}=I,
\]
which uses $\theta(0)=0$, $D\theta(0)=0$ and from the Implicit Function Theorem if $|x^\ast (t_1)|$ is sufficiently small. 
Let $(\xi(t),\eta(t))$ be the solution for (\ref{eqn:ham_all_xi_eta}) with $\eta(t_1)=\theta(\xi_1)$ and define $(x(t),p(t))$ by (\ref{eqn:xi_eta}). Then, $(x(t),p(t))$ satisfy (\ref{eqn:hsys_all}) and therefore $x(t)$ is an optimal trajectory for $t\geqslant t_1$ (this is guaranteed, for instance, by the result in \cite{Lukes:69:sicon}). However, from the principle of optimality, it follows that $x^\ast (t)=x(t)$ for $t\geqslant t_1$ since $x(t_1)=x^\ast (t_1)$. This shows that $p(t)$ defined above satisfies (\ref{eqn:hsys_all}) together with $x^\ast (t)$ and $p(t)\to0$ as $t\to\infty$.

To prove that $p(t)$ satisfying (\ref{eqn:hsys_all}) exists on $[0,\infty)$, we show that $p(t)$ can be extended to $[0,t_1]$. To do this, we equivalently consider (\ref{eqn:hsys2}) as the second equation from the necessary condition of optimality; 
\[\clred{
\dot p = -[Df(x^\ast (t))^\top +L(x^\ast (t),u^\ast (t))]p -Dh(x^\ast (t))^\top,
}
\]
where
\[
L(x,u)= \sum_{j=1}^m u_jDg_j(x)^\top,\quad 
g(x)=\bmat{g_1(x)&\cdots&g_m(x)}.
\]
}%
This is a linear equation for $p$ and solution exists in the sense of Carath\'eodory for all $t\in[0,t_1]$. 
\end{proof}
\clred{
Under the assumptions in Proposition~\ref{prop:p_exists_stable}, the linearization of (\ref{eqn:hsys_all}) at the origin has no eigenvalues on the imaginary axis, and therefore, a stable manifold $\Lambda_S$ at the origin exists. The following theorem is a restatement of Proposition~\ref{prop:p_exists_stable} in terms of the stable manifold, which simply says that the stable manifold of (\ref{eqn:hsys_all}), when it is projected to the base $x$-space, covers the region of stability or stabilizability. 
}
Let $\pi:\mathbb{R}^n\times\mathbb{R}^n\to\mathbb{R}^n$ be the canonical projection; $\pi(x,p)=x$. 
\begin{theorem}\label{thm:smani}
Suppose that $h(x)$ is $C^2$ and $Dh(0)=0$.
\begin{enumerate}[(i)]
    \item 
Assume that hypotheses in Theorem~\ref{prop:exp_stable} are satisfied. Then, for the Hamiltonian system (\ref{eqn:hsys_all}), the stable manifold $\Lambda_S$ \clred{satisfies} $ \pi(\Lambda_S)=\mathbb{R}^n$.
\item
\clred{Assume that all the hypotheses in Theorem~\ref{thm:minimizer_2} are satisfied. Assume additionally that the linear part of $(h(x), f(x)))$ is detectable.} 
Then, for the Hamiltonian system (\ref{eqn:hsys_all}), the stable manifold $\Lambda_S$ \clred{satisfies} $ U\subset \pi(\Lambda_S)$, \clred{where $U$ is the open set for the exponential stabilization}.
\end{enumerate}
\end{theorem}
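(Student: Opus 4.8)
The plan is to read the theorem directly off Proposition~\ref{prop:p_exists_stable} once the global stable manifold $\Lambda_S$ is identified with the set of initial conditions whose forward Hamiltonian flow converges to the origin. As noted just before the statement, under the hypotheses of either part the linearization of (\ref{eqn:hsys_all}) at the origin is hyperbolic: the block-diagonalization (\ref{eqn:ham_all_xi_eta}) exhibits its spectrum as that of the Hurwitz matrix $A-BB^\top P_1$ together with its reflection $-(A-BB^\top P_1)^\top$, so no eigenvalue lies on the imaginary axis and $\Lambda_S$ is well defined by the footnoted construction $\bigcup_{t\geqslant0}\varphi_{-t}(W_{\mathrm{loc}}^s(0))$, where $\varphi_t$ is the flow of (\ref{eqn:hsys_all}).

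First I would establish the dynamical characterization $\Lambda_S=\{z\,|\,\varphi_t(z)\to0\text{ as }t\to\infty\}$. The inclusion $\subseteq$ is immediate, since every point of $W_{\mathrm{loc}}^s(0)$ converges to the origin (boundedness near a hyperbolic equilibrium forces convergence) and this property is preserved under taking backward images $\varphi_{-t}$. For the reverse inclusion, suppose $\varphi_t(z)\to0$. Then the trajectory eventually enters the neighborhood on which $W_{\mathrm{loc}}^s(0)$ is defined, so there is a $t^\ast\geqslant0$ with $\varphi_{t^\ast}(z)\in W_{\mathrm{loc}}^s(0)$; hence $z=\varphi_{-t^\ast}(\varphi_{t^\ast}(z))\in\varphi_{-t^\ast}(W_{\mathrm{loc}}^s(0))\subseteq\Lambda_S$.

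With this identification the two assertions follow at once. For (i), fix an arbitrary $x_0\in\mathbb{R}^n$. Proposition~\ref{prop:p_exists_stable}(i) furnishes a solution $(x(t),p(t))$ of (\ref{eqn:hsys_all}) with $x(0)=x_0$ and $(x(t),p(t))\to0$ as $t\to\infty$; by the characterization just proved the initial point $(x_0,p(0))$ lies on $\Lambda_S$, so $x_0=\pi(x_0,p(0))\in\pi(\Lambda_S)$. As $x_0$ was arbitrary and $\pi(\Lambda_S)\subseteq\mathbb{R}^n$ trivially, we conclude $\pi(\Lambda_S)=\mathbb{R}^n$. Part (ii) is identical, replacing Proposition~\ref{prop:p_exists_stable}(i) by Proposition~\ref{prop:p_exists_stable}(ii) and letting $x_0$ range over the stabilizability set $U$, which yields $U\subseteq\pi(\Lambda_S)$.

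The substantive step is the identification of $\Lambda_S$ in the second paragraph: the footnote defines $W_{\mathrm{loc}}^s(0)$ through boundedness of the forward orbit near the origin, whereas Proposition~\ref{prop:p_exists_stable} delivers genuine convergence, so one must invoke hyperbolicity twice---once to upgrade local boundedness to convergence and once to guarantee that a convergent orbit is eventually captured by the local stable manifold. Everything else is a direct transcription of Proposition~\ref{prop:p_exists_stable}, which is why the theorem is billed as a restatement; no analytic estimate beyond those already in \S~\ref{sctn:existence_OC} and Proposition~\ref{prop:p_exists_stable} is required.
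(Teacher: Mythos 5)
Your proposal is correct and follows the paper's own route: the paper gives no separate proof, presenting Theorem~\ref{thm:smani} as a direct restatement of Proposition~\ref{prop:p_exists_stable} via the footnoted definition $\Lambda_S=\bigcup_{t\geqslant0}\varphi_{-t}(W_{\mathrm{loc}}^s(0))$, which is exactly how you argue. The only content you add beyond the paper is the explicit verification that $\Lambda_S$ coincides with the set of points whose forward Hamiltonian flow converges to the origin, and that verification (using hyperbolicity of the linearization both to upgrade boundedness to convergence and to capture convergent orbits in $W_{\mathrm{loc}}^s(0)$) is sound.
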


\section{Applications}\label{sctn:applications}
This section first provides a tool to weaken the growth conditions in Theorem~\ref{thm:minimizer_2}. Let $x=(x_1,x_2)$ with $x_1\in\mathbb{R}^{n_1}$, $x_2\in\mathbb{R}^{n_2}$, $n_1+n_2=n$. Rewrite (\ref{eqn:n_sys}) as
\[
\frac{d}{dt}\bmat{x_1\\x_2}=\bmat{f_1(x_1,x_2)\\f_2(x_1,x_2)}+\bmat{g_1(x_1,x_2)\\g_2(x_1,x_2)}u,
\]
where $f_j:\mathbb{R}^n\to\mathbb{R}^{n_j}$, $g_j:\mathbb{R}^n\to\mathbb{R}^{n_j\times m}$, $j=1,2$. 
Let $\varphi_R:\mathbb{R}^{n_2}\to\mathbb{R}$ be a $C^\infty$ cutoff function such that $\varphi_R(x_2)=1$ for $|x_2|<R$ and $\varphi_R(x_2)=0$ for $|x_2|\geqslant R+1$. Define $\tilde{f}_R(x_1,x_2):=f(x_1,\varphi_R(x_2)x_2)$ and $\tilde{g}_R(x_1,x_2):=g(x_1,\varphi_R(x_2)x_2)$. 
\begin{proposition}\label{prop:minimizer_2_appl}Assume that (\ref{eqn:n_sys}) is $C^1$-exponentially stabilizable for \clred{an open set} $U\subset\mathbb{R}^n$ and that $h$, which is $C^1$, satisfies (\ref{ineq:groth_h}) for some positive constants $p$, $c_h$, $\rho$ and for $|x|>\rho$. Suppose that $(f,h)$ is zero-state detectable for an open set containing $|x|\leqslant\rho$.   
Assume finally the following.  
\begin{enumerate}[(i)]
    \item  For any $R>0$, there exist positive constants $c_f=c_f(R)$, $c_g=c_g(R)$ and $0\leqslant\theta<1$ which is independent of $R$ such that $\tilde{f}_R$ and $\tilde{g}_R$ satisfy (\ref{ineq:groth_f}) and (\ref{ineq:groth_g}), respectively, for sufficiently large $x\in\mathbb{R}^n$. 
    \item There exist constants $c_{f2}>0$, $c_{g2}>0$, $0\leqslant\theta_2<1$ such that 
    \begin{subequations}\label{ineq:groth_all_app}
\begin{align}
|{f}_2(x_1,x_2)|&\leqslant c_{f2}|x_2|^{p+\theta_2} \label{ineq:groth_f_appl}\\
\|{g}_2(x_1,x_2)\|&\leqslant c_{g2}|x_2|^{p/2+\theta_2}\label{ineq:groth_g_appl}
\end{align}
    \end{subequations}
    for all $(x_1,x_2)\in\mathbb{R}^n$. 
\end{enumerate}
Then, an optimal control $u_R\in L^2((0,\infty);\mathbb{R}^m)$ exists for an OCP $\dot{x}=\tilde{f}_R(x)+\tilde{g}_R(x)u$, $x(0)=x_0\in U$ and cost functional (\ref{eqn:cost}). Moreover, for sufficiently large $R$, it is an optimal control for the original problem (\ref{eqn:n_sys})-(\ref{eqn:cost}).
\end{proposition}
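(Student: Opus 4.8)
The plan is to obtain $u_R$ by applying Theorem~\ref{thm:minimizer_2} to the cut-off system, and then to establish an \emph{a priori} bound on the $x_2$-excursion that is uniform in $R$, so that for large $R$ the cut-off optimal trajectory never enters the region where $\varphi_R$ is active and therefore solves the original system with the same cost.

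First I would verify that $\dot x=\tilde f_R(x)+\tilde g_R(x)u$ satisfies the hypotheses of Theorem~\ref{thm:minimizer_2}. Since $\varphi_R\equiv1$ for $|x_2|<R$, the data $\tilde f_R,\tilde g_R$ coincide with $f,g$ on $\{|x_2|<R\}$, a set containing a neighbourhood of the origin and, once $R>\rho$, the detectability set $\{|x|\leqslant\rho\}$; hence $C^1$-exponential stabilizability for $U$ and zero-state detectability are inherited (the stabilizing feedback from $x_0$ keeps its original trajectory, and thus finite cost $J_e$, once $R$ exceeds the $x_2$-excursion of that single trajectory), the global growth bounds (\ref{ineq:groth_f})--(\ref{ineq:groth_g}) hold by assumption~(i), and the coercive lower bound (\ref{ineq:groth_h}) is untouched. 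Theorem~\ref{thm:minimizer_2} then yields an optimal $u_R$ for the cut-off problem with $J(u_R)=\inf J_R\leqslant J_e$.

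The crux is an arc-length estimate for the $x_2$-component $x_{2,R}$ that does not deteriorate as $R\to\infty$. The key observation is that, because $|\varphi_R(x_2)x_2|\leqslant|x_2|$, assumption~(ii) transfers verbatim to the cut-off data, $|\tilde f_{2,R}(x_1,x_2)|\leqslant c_{f2}|x_2|^{p+\theta_2}$ and $\|\tilde g_{2,R}(x_1,x_2)\|\leqslant c_{g2}|x_2|^{p/2+\theta_2}$, with the \emph{same} constants for every $R$. I would then rerun, on the $x_2$-variable, the contradiction argument from the proof of Theorem~\ref{thm:minimizer_2}: set $L:=\sup_{t}|x_{2,R}(t)|$ and $I:=\{t\geqslant0:|x_{2,R}(t)|\geqslant L/2\}$; from $\int_0^\infty h(x_R)\,dt\leqslant J_e$ together with $h(x)\geqslant c_h|x|^p\geqslant c_h|x_2|^p$ one obtains $|I|\leqslant C L^{-p}$, and then, using (\ref{ineq:groth_f_appl})--(\ref{ineq:groth_g_appl}) for the cut-off data and $\|u_R\|_{L^2}^2\leqslant2J_e$, the length $\int_I|\dot x_{2,R}|\,dt$ is $O(L^{\theta_2})$; since this length must be at least $L/2$ to connect the spheres $|x_2|=L/2$ and $|x_2|=L$, and $\theta_2<1$, one concludes $L\leqslant B$ with $B$ independent of $R$.

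Finally, fixing $R>B$ (and large enough for the first step), the trajectory $x_R$ stays in $\{|x_2|<R\}$, where $\tilde f_R=f$, $\tilde g_R=g$, so it solves (\ref{eqn:n_sys}) and $J(u_R)$ is its original cost. The same $R$-uniform bound, applied to the original data $f_2,g_2$, shows that every admissible $u$ for (\ref{eqn:n_sys})--(\ref{eqn:cost}) with $J(u)\leqslant J_e$ keeps $|x_2|\leqslant B<R$ as well, hence has identical trajectories under both dynamics; therefore $\inf_{\mathrm{orig}}J\geqslant\inf J_R=J(u_R)\geqslant\inf_{\mathrm{orig}}J$, so $u_R$ is optimal for the original problem. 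The main obstacle is precisely this confinement estimate and its uniformity in $R$: that uniformity is exactly what $|\varphi_R(x_2)x_2|\leqslant|x_2|$ buys, and it is essential that the penalty bound survives projection onto $x_2$, so that (ii), phrased in $|x_2|$ rather than $|x|$, can be paired with $h\geqslant c_h|x_2|^p$.
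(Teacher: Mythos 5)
Your proof is correct and follows the same route as the paper's: verify that the cut-off system inherits the hypotheses of Theorem~\ref{thm:minimizer_2} (stabilizability via the original closed-loop trajectory once $R$ exceeds its excursion; zero-state detectability because $h\equiv0$ along a trajectory forces $|x|\leqslant\rho<R$, where the cut-off and original data coincide; growth by assumption (i)), obtain $u_R$, and then rule out large $x_2$-excursions by playing $\int_0^\infty h(\bar x_R)\,dt\leqslant J_e$, which gives $|I_R|\leqslant CL_R^{-p}$, against an arc-length estimate over $I_R$, using $\theta_2<1$. Two differences in execution are worth recording, both to your credit. First, the paper bounds the cut-off data globally by $O(R^{p+\theta_2})$ and $O(R^{p/2+\theta_2})$ and therefore needs the device ``we may assume $R\leqslant L_R$'' inside a contradiction argument; you instead observe that $|\varphi_R(x_2)x_2|\leqslant|x_2|$ (under the standard normalization $0\leqslant\varphi_R\leqslant1$) lets assumption (ii) pass to $\tilde{f}_{2R},\tilde{g}_{2R}$ with the \emph{same} constants, yielding an explicit bound $\sup_t|\bar{x}_{2R}(t)|\leqslant B$ with $B$ independent of $R$; the estimates are the same in substance, but your form is cleaner. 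Second, and more substantively: from the confinement $|\bar{x}_{2R}(t)|<R$ the paper concludes that ``the cutoff function has no effect'' and that $u_R$ is optimal for the original problem, but strictly this only shows that the cut-off optimal trajectory is a trajectory of (\ref{eqn:n_sys}) with cost equal to the cut-off infimum, i.e.\ that the original infimum is $\leqslant$ the cut-off infimum; optimality of $u_R$ for (\ref{eqn:n_sys})--(\ref{eqn:cost}) also requires the reverse inequality. Your final step --- running the same $R$-uniform excursion bound on the original data $f_2,g_2$ to show that any control of original cost $\leqslant J_e$ has trajectory confined to $\{|x_2|\leqslant B\}\subset\{|x_2|<R\}$, hence identical trajectory and cost under both dynamics --- supplies exactly that reverse inequality, making explicit a comparison of the two infima that the paper leaves implicit.
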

\begin{proof}
Suppose that $u_e=k(x)$ is the exponentially stabilizing feedback for $x_0\in U$ and let $x_e(t)$ be the corresponding solution. If $R>\max_{t\geqslant0}|x_e(t)|$, the corresponding solution is in the ball $|x|<R$ and the solution satisfies $|x_2|<R$ for $t\geqslant0$ and therefore, $u_e$ exponentially stabilizes $\dot x=\tilde{f}_R(x)+\tilde{g}_R(x)u$, $x(0)=x_0$. Let $J_e(x_0)$ be the value of (\ref{eqn:cost}) for this control. Next, we show that $(\tilde{f}_R, h)$ is zero-state detectable for a set containing $|x|\leqslant \rho$ for sufficiently large $R$. 
To this end, we assume that a solution $x_R(t)$ for $\dot x =\tilde{f}_R(x)$ satisfies $h(x_R(t))=0$ for $t\geqslant0$. 
Then, $|x_R(t)|\leqslant\rho$ from (\ref{ineq:groth_h}). Therefore, for $R>\rho$, $x_R(t)$ is a solution to $\dot x=f(x)$ since $|x_{2R}(t)|\leqslant\rho<R$, where we denote $x_R(t)=(x_{1R}(t),x_{2R}(t))$, $x_{jR}(t)\in\mathbb{R}^{n_j}$, $j=1,2$. 
From the detectability assumption on $(f,h)$, we have $x_R(t)\to0$, $t\to\infty$. Now, the existence of the optimal control $u_R$ follows from Theorem~\ref{thm:minimizer_2}. 

We prove that there exists an $R$ such that the corresponding optimal solution $\bar{x}_R(t)$ satisfies $|\bar{x}_{2R}(t)|<R$ for all $t\geqslant0$. If this is true, the cutoff function has no effect on the OCP defined by $\tilde{f}_R$, $\tilde{g}_R$ and (\ref{eqn:cost}) and therefore $u_R$ is an optimal control for the original problem (\ref{eqn:n_sys})-(\ref{eqn:cost}). Since $\bar{x}_{2R}(t)\to0$, $t\to\infty$ (by \clred{Theorems~\ref{prop:exp_stable}, \ref{thm:minimizer_2}}), define 
\begin{align*}
L_R &:= \max_{t\geqslant0}|\bar{x}_{2R}(t)|, \\
I_R &:= \left\{ t\geqslant0\, |\, |\bar{x}_{2R}(t)|\geqslant L_R/2 \right\}.
\end{align*}
We assume that $L_R\to\infty$ as $R\to\infty$ and derive a contradiction. We may assume that $R\leqslant L_R$. 
Then, for sufficiently large $R$, 
\begin{align*}
J_e(x_0) &\geqslant \int_0^\infty h(\bar{x}_R(t))\,dt\\
    &\geqslant c_h\int_{I_R}|\bar{x}_R(t)|^p\,dt \qquad \text{by (\ref{ineq:groth_h})}\\
    &\geqslant c_h\int_{I_R}|\bar{x}_{2R}(t)|^p\,dt \geqslant c_h |I_R|\left(\frac{L_R}{2}\right)^p,
\end{align*}
\clred{where $c_h$ is independent of $R$,} and thus we have
\begin{equation}
    |I_R|\leqslant C{L_R}^{-p}\leqslant CR^{-p}, \qquad\text{$C$ independent of $R$.}\label{ineq:I_R}
\end{equation}
Next, we give an estimate on the length of $\bar{x}_{2R}(t)$ in $I_R$. To do this, we note from (\ref{ineq:groth_all_app}) that $\tilde{f}_{2R}$, $\tilde{g}_{2R}$ satisfy the estimates 
\clred{
\begin{equation}
|\tilde{f}_{2R}(x_1,x_2)| = O(R^{p+\theta_2}),\quad 
\|\tilde{g}_{2R}(x_1,x_2)\| =O(R^{p/2+\theta_2}) \label{ineq:groth_all_app2}
\end{equation}
}
for $(x_1,x_2)\in\mathbb{R}^n$. Now we compute the length
\begin{align*}
\int_{I_R}\left|\frac{d}{dt}\bar{x}_{2R}(t)\right|\,dt &\leqslant 
    \int_{I_R}|\tilde{f}_{2R}(\bar{x}_R(t))| + \|\tilde{g}_{2R}(\bar{x}_R)\||u_R(t)|\,dt\\
    &\leqslant C|I_R|R^{p+\theta_2} +C'\sqrt{|I_R|}\|u_R\|_{L^2((0,\infty);\mathbb{R}^m)}R^{p/2+\theta_2}\quad\text{by (\ref{ineq:groth_all_app2})}\\
    &\leqslant C'' R^{\theta_2}\leqslant C'' (L_R)^{\theta_2} \quad\text{by (\ref{ineq:I_R})}
\end{align*}
where $C''$ is a positive constant independent of $R$ since $\|u_R\|_{L^2((0,\infty);\mathbb{R}^m)}^2<2J_e(x_0)$. 
If $L_R\to\infty$ as $R\to\infty$, the left side of the above grows at least as $O(L_R)$, which is a contradiction. If $R>\sup_{R>0}L_R$, then $|\bar{x}_{2R}(t)|<R$ for $t\geqslant0$. This choice of $R$ is possible for every $x_0\in U$ and the proposition has been proved. 
\end{proof}
\subsection{Feedback linearizable systems} 
Control system (\ref{eqn:n_sys}) is said to be {\em feedback linearizable} (see \cite{Khalil:92:NS} page 293 or \cite{Isidori:95:NCS} for more detail) in a domain $U\subset\mathbb{R}^n$ containing the origin if there exists a $C^1$ diffeomorphism $T:U\to\mathbb{R}^n$ such that $T(0)=0$ and the change of coordinates $y=T(x)$ transforms (\ref{eqn:n_sys}) into 
\[
\dot y = A y + B\beta(y)^{-1}[u-\alpha(y)]
\]
with $(A,B)$ controllable and $\alpha(y)$, $\beta(y)$ are $C^1$ with $\beta(y)$ being a nonsingular matrix for $y\in T(U)$. If (\ref{eqn:n_sys}) is feedback linearizable in $U$, the input transformation $u=\alpha(y)+\beta(y)v$ brings the system into $\dot y=Ay+Bv$. It is possible to prove that a feedback linearizable system in $U$ is exponentially stabilizable for $U$ in the following way. Take a matrix $K\in\mathbb{R}^{m\times n}$ and an open set $V\subset T(U)$ such that all the solutions of $\dot y=(A+BK)y$ starting from $V$ at $t=0$ stay in $T(U)$ for $t\geqslant0$ and converge to $y=0$ as $t\to\infty$. Also, from the controllability of $(A,B)$, for any $y_0\in T(U)$, there exists a piece-wise continuous control that brings $y_0$ to a point in $V$ within $T(U)$ in finite time. Hence, for any $x_0\in U$ there is a control $u_e$ with which an exponential estimate holds for the corresponding solution of (\ref{eqn:n_sys}). 
\begin{ex}[\cite{Khalil:92:NS} page 296]
A mathematical model of a synchronous generator connected to an infinite bus is 
\begin{align*}
    \frac{d}{dt}\bmat{x_1\\x_2\\x_3}=\bmat{x_2\\ -a[(1+x_3)\sin(x_1+\delta)-\sin\delta]-bx_2\\ 
    -cx_3 +d[\cos(x_1+\delta)-\cos\delta]}+\bmat{0\\0\\1}u
\end{align*}
where $a$, $b$, $c$ and $\delta$ are positive constants. It is feedback linearizable in $U:=\{-\delta<x_1<\pi-\delta\}\subset\mathbb{R}^3$. This system satisfies the growth conditions (\ref{ineq:groth_f}), (\ref{ineq:groth_g}) for $p\geqslant1$, $\theta=0$ in $\mathbb{R}^3$. Therefore, for any $h(x)$ satisfying $h(0)=0$, $Dh(0)=0$ and (\ref{ineq:groth_h}) for some $c_h>0$ and $\rho>0$, an optimal control exists for each initial value in $U$. 
\end{ex}
\begin{ex}[A 2-dimensional pendulum in \cite{Sakamoto:13:automatica,Horibe:17:ieee_cst,Oishi:17:ifac-wc}]
\clred{A pendulum on a cart is a classical control experiment in linear and nonlinear control theories} and if the cart position is neglected, we have a system of the form
\begin{align*}
\dot{x}_1 &=x_2\\
\dot{x}_2 &= \frac{\sin x_1 -x_2^2\sin x_1\cos x_1}{1+\sin^2x_1}-\frac{\cos x_1}{1+\sin^2x_1}u
\end{align*}
where $x_1$ is the angle of the pendulum from vertical (up). For $U:=\{|x_1|<\frac{\pi}{2}\}\subset\mathbb{R}^2$, the system is feedback linearizable and growth conditions (\ref{ineq:groth_f}), (\ref{ineq:groth_g}) are satisfied for $p\geqslant2$, $\theta=0$ and thus, for a suitable $h$, an optimal control exists in $U$. In \cite{Horibe:17:ieee_cst,Oishi:17:ifac-wc}, however, exponentially stabilizing feedback swing-up controls (namely $(x_1(0),x_2(0))=(\pi,0)$) are designed by computing stable manifolds of associated Hamiltonian systems. Moreover, it has been reported that for $h(x)=\varepsilon (|x_1|^2+|x_2|^2)$ with $\varepsilon$ small, a number of feedback swing-up stabilization controls exist for a single cost functional (\ref{eqn:cost}). Each control effectively uses reaction (or swing) of the pendulum and the more swings are used during the control, the smaller the cost value gets. 
In \cite{Horibe:17:ieee_cst}, a detailed account of this phenomenon (namely, the existence of multiple solutions to an HJBE) is shown using 3D figures of the stable manifold. However, it has not been confirmed whether or not an optimal control that minimizes the cost functional exists. Now, using Theorem~\ref{thm:minimizer_2}, we can answer this question by saying that as long as $\varepsilon>0$, there exists an optimal control that achieves the minimum value of the cost. The question of whether or not infinitely many swing-up controllers for the HJBE exist, when $h=0$, is still open. 
\end{ex}
\subsection{Systems with globally exponentially stable zero dynamics}
It is known (see, e.g., \cite{Byrnes:91:ieee-ac}, \cite{Isidori:95:NCS}) that under suitable conditions such as relative degree condition, system (\ref{eqn:n_sys}), when it is a single input system, can be transformed by a smooth coordinate change and a smooth feedback transformation into 
\begin{subequations}
\begin{align}
&\dot\eta = q(\eta,\xi_1)\label{eqn:zero_dyn1}\\
& \dot{\xi}_1 = \xi_2, \ldots, \dot{\xi}_r = v \label{eqn:zero_dyn2}
\end{align}
\end{subequations}
where $\xi_j(t)\in\mathbb{R}^1$, $j=1,\ldots,r$, $\eta(t)\in\mathbb{R}^{n-r}$ and $v\in\mathbb{R}^1$ is a new input. This system transformation is important to understand the system structure from an input-output viewpoint and $\dot\eta=q(0,\eta)$ is called zero dynamics describing the dynamics on which the system output $y=\xi_1$ is identically zero. 

In this subsection, we are interested merely in an OCP for (\ref{eqn:zero_dyn1})-(\ref{eqn:zero_dyn2}) with input $v$ and a cost functional. 
Suppose that zero dynamics $\dot\eta=q(0,\eta)$ is globally exponentially stable. Then, there exists a smooth feedback control that exponentially stabilizes (\ref{eqn:zero_dyn1})-(\ref{eqn:zero_dyn2}) for all initial conditions in $\mathbb{R}^n$. Suppose, in addition, that for any $R>0$ there is a $C=C(R)>0$ such that $q(\eta, \varphi_R(\xi_1)\xi_1)$ satisfies a growth condition
\[
|q(\eta,\varphi_R(\xi_1)\xi_1)|\leqslant C|\eta|^p
\]
for sufficiently large $\xi_1$ and $\eta$, where $\varphi_R$ is the cutoff function and $p>0$ is a constant independent of $R$. 
We consider an OPC (\ref{eqn:zero_dyn1})-(\ref{eqn:zero_dyn2}) and $J=\int_0^\infty |v|^2/2 +h(\xi_1,\ldots,\xi_r,\eta)\,dt$, where $h\geqslant0$ is $C^1$ with $h(0)=0$, $Dh(0)=0$ and satisfies (\ref{ineq:groth_h}) for $p>0$ above and some constants $c_h>0$, $\rho>0$. Assume, in addition, that $h$ is zero-state detectable with (\ref{eqn:zero_dyn1})-(\ref{eqn:zero_dyn2}) in $\mathbb{R}^n$. 
Then, from Proposition~\ref{prop:minimizer_2_appl}, \clred{an optimal control} exists for the OCP for any initial conditions. 
\begin{ex}
Consider a 3-dimensional system
\begin{align*}
\dot x_1 &=-x_1 +x_1^2x_2\\
\dot x_2 &=x_3 \\
\dot x_3 &=u.
\end{align*}
This system is in the form (\ref{eqn:zero_dyn1})-(\ref{eqn:zero_dyn2}) with globally exponentially stable zero dynamics $\dot x_1=-x_1$ and is globally exponentially stabilizable. Consider also an OCP for this system 
with a quadratic cost functional $J=\frac{1}{2}\int_0^\infty u^2+x_1^2+x_2^2+x_3^2\,dt$. This problem does not satisfy the growth condition in Theorem~\ref{thm:minimizer_2}, but, with a cutoff function on $x_2$, the right side of the first equation $-x_1+\varphi_R(x_2)x_1^2x_2$ satisfies the hypotheses in Proposition~\ref{prop:minimizer_2_appl}. Therefore, a solution for this OCP exists \clred{for any $x_0\in\mathbb{R}^3$}. 
\end{ex}
%
%
\subsection{Turnpike analysis}
Let us consider a finite horizon OCP for (\ref{eqn:n_sys}) with 
\begin{equation}
J_T = \int_0^T |u(t)|^2/2 +h(x(t))\,dt,\label{cost:J_T}
\end{equation}
where $h(x)=x^\top C^\top Cx$, $C\in \mathbb{R}^{r\times n}$, and $x(T)=x_f$ is specified. 

Suppose that $u_T$ is an optimal control and $x_T$ is the optimal trajectory. 
The optimal control $u_T$ is said to {\em enjoy the turnpike property} if for any $\varepsilon>0$, there exists an $\eta_\varepsilon>0$ such that 
\[
\left| \left\{t\geqslant0\,|\,|u_T (t)|+|x_T(t,x_0)|>\varepsilon  \right\} \right| <\eta_\varepsilon
\]
for all $T>0$, where $\eta_\varepsilon$ depends only on $\varepsilon$, $f$, $g$, $h$, and $x_0$ and $|\cdot|$ denotes length (Lebesgue measure) of interval. A necessary condition for optimality is the existence of $(x(t),p(t))$, $0\leqslant t\leqslant T$ for the Hamiltonian system (\ref{eqn:hsys1})-(\ref{eqn:hsys2}) with $x(0)=x_0$, $x(T)=x_f$.

In \cite{sakamoto:20:prep_cdc}, it is shown that one of the sufficient conditions for the OCP to have a solution with turnpike property is that at an equilibrium of the Hamiltonian system, a stable manifold $\Lambda_S$ exists and satisfies $x_0\in \mathrm{Int}(\pi(\Lambda_S))$ and an unstable manifold $\Lambda_U$ exists and satisfies $x_f\in\mathrm{Int}(\pi(\Lambda_U))$, where $\mathrm{Int}$ represents the interior of a set in $\mathbb{R}^n$ \clred{and $\pi$ is the canonical projection appeared in \S~\ref{sctn:stabl_manifold}}. We show, in the following example, that those conditions can be examined using the results of the present paper. 
\begin{ex}
Backstepping design is one of the popular and powerful feedback design methods which is applied widely in practice
(see, e.g., \cite{Byrnes:89:syscon,Krstic:95:NACD,Sepulchre:97:CNC}). In this section, we show a class of nonlinear control systems for which \clred{the} turnpike occurs for all initial and terminal conditions using backstepping stabilization. 

Let us consider a nonlinear control system of the form
\begin{subequations}\label{eqns:backstepp}
\begin{align}
&\dot x_1 = f_1(x_1)+g_1(x_1)x_2 \label{eqn:backstepp1}\\
&\dot x_2 =f_2(x_1,x_2)+g_2(x_1,x_2)u,\label{eqn:backstepp2}
\end{align}
\end{subequations}
where $x_1,\ x_2\in\mathbb{R}^{n}$ are the \clred{states} and $u\in\mathbb{R}^n$ is the control. We assume that $(0,0)$ is an equilibrium of (\ref{eqn:backstepp1})-(\ref{eqn:backstepp2}), that $f_j$, $g_j$, $j=1,2$ are smooth and that $g_1(x_1)$, $g_2(x_1,x_2)$ are invertible for all $x_1$, $x_2$.

The backstepping technique is a cascade design, in which (\ref{eqn:backstepp1}) is stabilized with a virtual stabilizing input $x_2=\alpha(x_1)$ and then obtain a feedback control that stabilizes the error system for $z:=x_2-\alpha(x_1)$. 
A typical feedback stabilization using backstepping under the above conditions is, first, obtain $\alpha(x_1)$ that globally exponentially stabilizes (\ref{eqn:backstepp1}) with a Lyapunov function $V_1(x_1)$, such as $\alpha(x_1)=g_1(x_1)^{-1}[-f_1(x_1)-x_1]$. 
Next, with a new coordinates $(x_1,z)$ where $z=x_2-\alpha(x_1)$, the overall exponentially stabilizing feedback is given as 
\[
u=g_2(x_1,z+\alpha(x_1))^{-1}\left[-f_2(x_1,z+\alpha(x_1)) +\dot{\alpha}(x_1) -g_1(x_1)^\top DV_1(x_1)^\top -z\right].
\]
For above $\alpha(x_1)$, one takes $V_1(x_1)=|x_1|^2/2$ and the global exponential stability of the closed loop system is guaranteed with 
$V(x_1,z)=V_1(x_1)+|z|^2/2$ and $\dot{V}=-|x_1|^2-|z|^2$. 
The time-reversed system of (\ref{eqn:backstepp1})-(\ref{eqn:backstepp2}) is 
\begin{subequations}\label{eqns:backstepp_rev}
\begin{align}
& x_1' = -f_1(x_1)-g_1(x_1)x_2 \label{eqn:backstepp_rev1}\\
&x_2' = -f_2(x_1,x_2)-g_2(x_1,x_2)u,\label{eqn:backstepp_rev2}
\end{align}
\end{subequations}
where $(\cdot)' = \frac{d}{d\tau}$, $\tau =-t$. This system can also be globally exponentially stabilized via the backstepping method. Thus for OCPs with cost functionals satisfying growth conditions in Theorem~\ref{thm:minimizer_2} or Proposition~\ref{prop:minimizer_2_appl} \clred{and the linear detectabiliity condition}, one can give estimates on the existence regions of stable and unstable manifolds \clred{$\Lambda_S$, $\Lambda_U$} in associated Hamiltonian systems via Theorem~\ref{thm:smani} \clred{as
$
\pi(\Lambda_S)=\mathbb{R}^n$, $\pi(\Lambda_U)=\mathbb{R}^n$.
}%
\clred{The result in \cite{sakamoto:20:prep_cdc} guarantees that the turnpike occurs for all initial and terminal states. }

For example, a nonlinear system
\begin{align*}
    \dot{x}_1 &=x_1^2 +(1+x_1^2)x_2\\
    \dot{x}_2 &= x_2^2 +u
\end{align*}
satisfies the conditions above and is globally exponentially stabilizable at the origin via a smooth backstepping feedback. For a cost functional 
\[
J = \frac{1}{2}\int_0^\infty u^2+x_1^2+x_2^2\,dt, 
\]
one readily sees that Proposition~\ref{prop:minimizer_2_appl} can be applied \clred{(note that the growth conditions in Theorem~\ref{thm:minimizer_2} are not satisfied)}.
Then, using Theorem~\ref{thm:smani} for this OCP, the stable manifold $\Lambda_S$ of the associated Hamiltonian system at the origin exists with $\pi(\Lambda_S)=\mathbb{R}^{2}$ and applying the same argument for the time-reversed OCP replacing $t$ by $\tau$, the unstable manifold $\Lambda_U$ of the Hamiltonian system at the origin exists with $\pi(\Lambda_U)=\mathbb{R}^{2}$. Therefore, we conclude that for all initial and terminal states the finite horizon OCP
\[
J = \frac{1}{2}\int_0^T u^2+x_1^2+x_2^2\,dt, 
\]
with sufficiently large $T$ has an optimal control that enjoys the turnpike property. 
\end{ex}
\begin{remark}
In \cite{sakamoto:20:prep_cdc}, a numerical example is worked out in detail in which turnpike occurs for all $x_0$ and for sufficiently small $x_f$ using the global exponential stabilizability. The example also exhibits the peaking phenomenon \cite{Sussmann:91:ieee_tac} during the turnpike. 
\end{remark}

{\bf Acknowledgment.} The author would like to thank the Chair of Computational Mathematics at University of Deusto for their support during his stay. 
He is also grateful to Dario Pighin for valuable discussions. Especially, the proofs of Proposition~\ref{prop:minimizer} and Lemmas~\ref{lemma:ode_affine}, \ref{lemma:h_convergence} owe to him.

\appendix
\section{Useful results}
\setcounter{equation}{0}
\renewcommand{\theequation}{\Alph{section}.\arabic{equation}}
\renewcommand{\thetheorem}{\Alph{section}.\arabic{theorem}}
This Appendix includes several preliminary propositions and lemmas for the proofs of the main results. 
\begin{proposition}\label{prop:ode_new}
Let $D\subset\mathbb{R}^{n+1}$ be an open set and let $F:D\to\mathbb{R}^n$, $(t,x)\in D \mapsto F(t,x)$, be a map which is measurable in $t$ and continuous in $x$. For positive $\alpha$, $\beta$, let us introduce notations $I_\alpha(t_0)$, $B_\beta(x_0)$ representing the sets 
\[
I_\alpha(t_0)=\{t\geqslant0\,|\, |t-t_0|\leqslant \alpha\}, \quad
B_\beta(x_0)=\{x\in\mathbb{R}^n\,|\,|x-x_0|\leqslant\beta\}.
\]
Let $U\subset\mathbb{R}^n$ be a compact set and assume that one can take $\bar\beta>0$ such that 
\[
V:=[0,\infty)\times \bigcup_{x_0\in U}B_{\bar{\beta}}(x_0)\subset D.
\]
Assume also that there exist $0<\alpha$, $0<\beta\leqslant\bar\beta$ and functions $m(t)$, $k(t)$ such that the following are satisfied, 
\begin{subequations}
\begin{gather}
|F(t,x)|\leqslant m(t) \quad \text{on }V,\quad \text{and}\quad
\int_{I_\alpha(t_0)}m(t)\,dt \leqslant \beta \quad\text{for all } t_0\geqslant0, \label{ineq:ode_new1}\\
|F(t,x)-F(t,y)|\leqslant k(t)|x-y| \quad \text{on }V,\quad \text{and}\quad
\int_{I_\alpha(t_0)}k(t)\,dt \leqslant\frac{1}{2} \quad\text{for all } t_0\geqslant0.\label{ineq:ode_new4}
\end{gather}
\end{subequations}
Then, for any $t_0\geqslant0$, $x_0\in U$, there exists a unique solution $x(t,t_0,x_0)$ to $\dot x=F(t,x)$ passing through $(t_0,x_0)$ defined on $I_\alpha(t_0)$ satisfying $|x(t,t_0,x_0)-x_0|\leqslant\beta$ for $t\in I_\alpha(t_0)$, where $\alpha$, $\beta$ are independent of $t_0$. This $x$ is a continuous map defined on $\bigcup_{t_0\geqslant0} (I_\alpha(t_0)\times \{t_0\})\times U$. 
\end{proposition}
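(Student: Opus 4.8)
The plan is to recast the initial value problem as the fixed point equation
\[
x(t)=x_0+\int_{t_0}^t F(s,x(s))\,ds
\]
and apply the Banach contraction principle on each fiber $I_\alpha(t_0)$, the uniform integral bounds in (\ref{ineq:ode_new1})--(\ref{ineq:ode_new4}) being exactly what keeps the contraction constant and the invariant ball independent of $t_0$. First I would fix $t_0\geqslant0$, $x_0\in U$ and work in the complete metric space $\mathcal{B}$ of continuous maps $x:I_\alpha(t_0)\to\mathbb{R}^n$ with $\sup_{t\in I_\alpha(t_0)}|x(t)-x_0|\leqslant\beta$, under the sup-norm. On $\mathcal{B}$ define $\mathcal{T}$ by the right-hand side above. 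Since $x_0\in U$ and $|x(s)-x_0|\leqslant\beta\leqslant\bar\beta$, the graph $(s,x(s))$ stays in $V$, so $F(s,x(s))$ is defined with $|F(s,x(s))|\leqslant m(s)$; measurability of $F$ in $t$ and continuity in $x$ make $s\mapsto F(s,x(s))$ measurable, so $\mathcal{T}x$ is absolutely continuous.

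The first bound in (\ref{ineq:ode_new1}) gives $|(\mathcal{T}x)(t)-x_0|\leqslant\int_{I_\alpha(t_0)}m(s)\,ds\leqslant\beta$, so $\mathcal{T}$ maps $\mathcal{B}$ into itself, while the Lipschitz bound in (\ref{ineq:ode_new4}) gives, for $x,y\in\mathcal{B}$, $\|\mathcal{T}x-\mathcal{T}y\|_\infty\leqslant\big(\int_{I_\alpha(t_0)}k(s)\,ds\big)\|x-y\|_\infty\leqslant\tfrac12\|x-y\|_\infty$. Hence $\mathcal{T}$ is a contraction; its unique fixed point is the unique solution on $I_\alpha(t_0)$ with $|x(t,t_0,x_0)-x_0|\leqslant\beta$, and $\alpha$, $\beta$ are independent of $t_0$ precisely because the constants in the hypotheses are uniform in $t_0$.

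The main work — and the part I expect to be delicate — is the joint continuity of $(t,t_0,x_0)\mapsto x(t,t_0,x_0)$, which must be handled despite the solutions living on the moving intervals $I_\alpha(t_0)$. I would first prove continuous dependence on the initial data, uniformly in $t$. Comparing $x(\cdot)=x(\cdot,t_0,x_0)$ and $y(\cdot)=x(\cdot,t_0',x_0')$ on the common interval $I_\alpha(t_0)\cap I_\alpha(t_0')$ through their integral equations, and arguing on each side of the two initial times separately, yields
\[
|x(t)-y(t)|\leqslant |x_0-x_0'|+\left|\int_{t_0}^{t_0'}m(s)\,ds\right|+\int k(s)\,|x(s)-y(s)|\,ds.
\]
Gronwall's inequality together with $\int_{I_\alpha(t_0')}k\leqslant\tfrac12$ then bounds the left side by $e^{1/2}\big(|x_0-x_0'|+|\int_{t_0}^{t_0'}m|\big)$. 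Since the uniform bound in (\ref{ineq:ode_new1}) forces $m\in L^1_{\mathrm{loc}}([0,\infty))$, the term $\int_{t_0}^{t_0'}m$ tends to $0$ as $t_0'\to t_0$ by absolute continuity of the Lebesgue integral, so $\sup_t|x(t)-y(t)|\to0$ as $(t_0',x_0')\to(t_0,x_0)$.

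Finally I would obtain joint continuity by the triangle inequality: for $(t_n,t_{0,n},x_{0,n})\to(t^\ast,t_0^\ast,x_0^\ast)$ in the stated domain, write
\[
|x(t_n,t_{0,n},x_{0,n})-x(t^\ast,t_0^\ast,x_0^\ast)|\leqslant|x(t_n,t_{0,n},x_{0,n})-x(t_n,t_0^\ast,x_0^\ast)|+|x(t_n,t_0^\ast,x_0^\ast)-x(t^\ast,t_0^\ast,x_0^\ast)|.
\]
For $n$ large, $t_n$ lies in the common domain, so the first term is controlled by the uniform-in-$t$ continuous dependence just established, and the second tends to $0$ because $t\mapsto x(t,t_0^\ast,x_0^\ast)$ is absolutely continuous, its increment being at most $|\int_{t^\ast}^{t_n}m(s)\,ds|$. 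The only point requiring care is when $t^\ast$ is an endpoint of $I_\alpha(t_0^\ast)$, where I would note that the solution and its estimates extend continuously to the closed interval so the splitting still applies. The chief obstacle throughout is the bookkeeping of the mismatched integration limits and domains as $t_0$ varies, which the uniform integral hypotheses are designed to absorb.
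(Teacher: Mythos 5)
Your existence--uniqueness argument is essentially the paper's: the same invariant ball of radius $\beta$ and the same contraction constant $\tfrac{1}{2}$ extracted from (\ref{ineq:ode_new1})--(\ref{ineq:ode_new4}), via Banach's fixed point theorem. Where you genuinely diverge is the joint continuity of $x(t,t_0,x_0)$. The paper never compares solutions on moving intervals: it normalizes the domain by the shift $\varphi(t)=x(t+t_0)-x_0$, so that every fixed point problem lives in the single space $C^0(I_\alpha(0);\mathbb{R}^n)$ and the data $(t_0,x_0)$ enter only as parameters of the operator $(T_{(t_0,x_0)}\varphi)(t)=\int_{t_0}^{t_0+t}F(s,\varphi(s-t_0)+x_0)\,ds$; since these operators are contractions with the uniform constant $\tfrac{1}{2}$, the Uniform Contraction Mapping Theorem yields continuity of the fixed point in $(t_0,x_0)$ in sup-norm at once, and joint continuity follows. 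You instead keep the moving intervals $I_\alpha(t_0)$, prove a quantitative continuous-dependence estimate by Gronwall's inequality, $\sup_t|x(t)-y(t)|\leqslant e^{1/2}\bigl(|x_0-x_0'|+\bigl|\int_{t_0}^{t_0'}m\bigr|\bigr)$, using local integrability of $m$ and absolute continuity of the Lebesgue integral, and then assemble joint continuity by a triangle inequality. Both routes are valid; yours is more elementary (no uniform contraction principle) and produces an explicit modulus of continuity, while the paper's shift trick buys exactly what costs you effort: it eliminates the mismatched-domain bookkeeping, in particular the endpoint case you flag. On that point your fix is stated too loosely: when $t^\ast$ is an endpoint of $I_\alpha(t_0^\ast)$, the sequence $t_n$ can lie outside $I_\alpha(t_0^\ast)$ (e.g.\ $t_{0,n}=t_0^\ast+\tfrac{1}{n}$, $t_n=t_{0,n}+\alpha$), and then the middle term $x(t_n,t_0^\ast,x_0^\ast)$ of your splitting is simply undefined -- "extending the solution past the closed interval" is not available without further hypotheses. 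The clean repair stays inside the stated domain: insert $s_n$, the projection of $t_n$ onto $I_\alpha(t_0^\ast)$, which for large $n$ lies in $I_\alpha(t_{0,n})\cap I_\alpha(t_0^\ast)$, and use a three-term splitting whose extra increments are controlled by $\bigl|\int_{s_n}^{t_n}m\bigr|\to0$. This is a repairable blemish in the write-up, not a gap in the method.
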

\begin{proof}
Define a closed set $\mathscr{A}$ in $C^0(I_\alpha(0);\mathbb{R}^n)$ and an operator $T:\mathscr{A}\to C^0(I_\alpha(0);\mathbb{R}^n)$ by
\begin{gather*}
\mathscr{A}=\{\varphi\in C^0(I_\alpha(0);\mathbb{R}^n)\,|\, \varphi(0)=0, \ |\varphi(t)|\leqslant\beta \text{ for } t\in I_\alpha(0)\}\\
(T\varphi)(t)=\int_{t_0}^{t_0+t}F(s,\varphi(s-t_0)+x_0)\,ds.
\end{gather*}
Then for $t\in I_\alpha(0)$, we have
\begin{align*}
|(T\varphi)(t)| &\leqslant \int_{t_0}^{t_0+t}|F(s,\varphi(s-t_0)+x_0)\,ds\\
&\leqslant \int_{t_0}^{t_0+t}m(s)\,ds\\
&\leqslant \int_{I_\alpha(t_0)}m(s)\,ds\leqslant\beta,
\end{align*}
where we have used $(s,\varphi(s-t_0)+x_0)\in V$ for $s\in [t_0,t_0+t]$ and (\ref{ineq:ode_new1}), implying that $T\varphi \in \mathscr{A}$. Since $(T\varphi)(0)=0$, we confirm that $T:\mathscr{A}\to \mathscr{A}$. 

Next, take $\varphi, \bar \varphi \in \mathscr{A}$ and $t\in I_\alpha(0)$. Then, 
\begin{align*}
|(T\varphi)(t)-(T\bar\varphi)(t)| &\leqslant \int_{t_0}^{t_0+t}|F(s,\varphi(s-t_0)+x_0)-F(s,\bar\varphi(s-t_0)+x_0)|\,ds\\
&\leqslant \int_{t_0}^{t_0+t}k(s)|\varphi(s-t_0)-\bar\varphi(s-t_0)|\,ds\\
&\leqslant\|\varphi-\bar\varphi\|_{C^0(I_\alpha(0);\mathbb{R}^n)}\int_{I_\alpha(t_0)}k(s)\,ds\\
&\leqslant \frac{1}{2}\|\varphi-\bar\varphi\|_{C^0(I_\alpha(0);\mathbb{R}^n)},
\end{align*}
where we have used $(s,\varphi(s-t_0)+x_0),\ (s,\bar\varphi(s-t_0)+x_0)\in V$ for $s\in [t_0,t_0+t]$ and (\ref{ineq:ode_new4}). Therefore, by Contraction Mapping Theorem, there exists a unique $\varphi\in\mathscr{A}$ such that 
\[
\varphi(t)=\int_{t_0}^{t_0+t}F(s,\varphi(s-t_0)+x_0)\,ds.
\]
Defining $x(t)=\varphi(t-t_0)+x_0$, $x(t)$ is the unique solution of $\dot x=F(t,x)$ passing through $(t_0,x_0)$ defined on $I_\alpha(t_0)$, where $\alpha$ is independent of $t_0$. 

If we regard $T=T_{(t_0,x_0)}$, we have proved that $T_{(t_0,x_0)}$ is a uniform contraction with respect to $(t_0,x_0)$ in $[0,\infty)\times U$. Therefore, from Uniform Contraction Mapping Theorem, the fixed point $\varphi$ is continuous function of $(t_0,x_0)$ and therefore, $x(t,t_0,x_0)$ is a continuous function in $t\in I_\alpha(t_0)$ and $(t_0,x_0)\in [0,\infty)\times U$, where $\alpha$ is independent of $t_0$. 
\end{proof}
\begin{lemma}\label{lemma:ode_affine}
\clred{S}uppose that $f:\mathbb{R}^n\to\mathbb{R}^n$ is $C^1$ and that $g:\clred{\mathbb{R}^n\to\ }\mathbb{R}^{n\times m}$ is Lipschitz continuous. Let $U\subset\mathbb{R}^n$ be a compact set and let 
\[
V:=[0,\infty)\times \bigcup_{x_0\in U}B_\beta(x_0). 
\]
\clred{Let us consider (\ref{eqn:n_sys}) with $u\in L^2((0,\infty);\mathbb{R}^m)$.} Then, there exist $\alpha$, $\beta>0$ and $m(t)$, $k(t)$ such that (\ref{ineq:ode_new1})-(\ref{ineq:ode_new4}) in Proposition~\ref{prop:ode_new} \clred{with $F(t,x)=f(x)+g(x)u(t)$} are satisfied. Moreover, there exists an $M>0$ which is independent of $t_0$ such that \clred{the solution satisfies}
\[
|x(t,t_0,x_0)|\leqslant M \quad \text{for }t\in I_\alpha(t_0), \ t_0\geqslant0. 
\]
\end{lemma}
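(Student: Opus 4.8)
The plan is to verify directly the hypotheses (\ref{ineq:ode_new1}) and (\ref{ineq:ode_new4}) of Proposition~\ref{prop:ode_new} for the choice $F(t,x)=f(x)+g(x)u(t)$, and then to read off the uniform bound $M$ from the conclusion of that proposition. The decisive preliminary step is to decouple the constants that enter the bounds from the radius $\beta$ that we are still free to choose. To this end I would fix a reference radius $\bar\beta:=1$, note that the compact set $U$ lies in some ball $B_r(0)$, and set $K:=\overline{B_{r+\bar\beta}(0)}$, a fixed compact \emph{convex} set containing $\bigcup_{x_0\in U}B_{\bar\beta}(x_0)$. On $K$ I extract the four finite constants $F_0:=\sup_{x\in K}|f(x)|$, $G_0:=\sup_{x\in K}\|g(x)\|$, $L_f:=\sup_{x\in K}\|Df(x)\|$ (finite since $f$ is $C^1$ and $K$ is compact) and the global Lipschitz constant $L_g$ of $g$. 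Convexity of $K$ lets the mean value inequality give $|f(x)-f(y)|\leqslant L_f|x-y|$ for all $x,y\in K$.

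With these constants fixed I define
\[
m(t):=F_0+G_0|u(t)|,\qquad k(t):=L_f+L_g|u(t)|.
\]
The pointwise bounds then follow at once from the triangle inequality and the Lipschitz estimates: for $x,y\in K$ one has $|F(t,x)|\leqslant|f(x)|+\|g(x)\|\,|u(t)|\leqslant m(t)$ and $|F(t,x)-F(t,y)|\leqslant(L_f+L_g|u(t)|)|x-y|=k(t)|x-y|$. Since $u\in L^2((0,\infty);\mathbb{R}^m)\subset L^1_{\mathrm{loc}}$, both $m$ and $k$ are locally integrable, and because $\bigcup_{x_0\in U}B_{\bar\beta}(x_0)\subset K$ these estimates hold on the set $V$ of Proposition~\ref{prop:ode_new}.

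The only quantitative content is the integral smallness over the windows $I_\alpha(t_0)$. Here I use that $|I_\alpha(t_0)|\leqslant2\alpha$ together with the Cauchy--Schwarz inequality to obtain, uniformly in $t_0\geqslant0$,
\[
\int_{I_\alpha(t_0)}m(t)\,dt\leqslant 2\alpha F_0+G_0\sqrt{2\alpha}\,\|u\|_{L^2((0,\infty);\mathbb{R}^m)},
\]
together with the analogous bound for $k$ in which $F_0,G_0$ are replaced by $L_f,L_g$. Both right-hand sides are independent of $t_0$ and tend to $0$ as $\alpha\to0^+$, so I may fix $\alpha>0$ small enough that the bound for $k$ is at most $\frac{1}{2}$ and the bound for $m$ is at most $\bar\beta$. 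Setting $\beta:=\bar\beta$ then yields $0<\beta\leqslant\bar\beta$, $\int_{I_\alpha(t_0)}k\,dt\leqslant\frac{1}{2}$ and $\int_{I_\alpha(t_0)}m\,dt\leqslant\beta$ for every $t_0$, with $\alpha,\beta$ independent of $t_0$.

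All hypotheses of Proposition~\ref{prop:ode_new} are then satisfied, yielding for every $t_0\geqslant0$, $x_0\in U$ a unique Carath\'eodory solution $x(t,t_0,x_0)$ of $\dot x=F(t,x)$ on $I_\alpha(t_0)$ with $|x(t,t_0,x_0)-x_0|\leqslant\beta$. Writing $R_U:=\max_{x_0\in U}|x_0|$, the triangle inequality gives
\[
|x(t,t_0,x_0)|\leqslant |x_0|+\beta\leqslant R_U+\beta=:M
\]
for all $t\in I_\alpha(t_0)$ and all $t_0\geqslant0$, which is the asserted uniform bound. The only real obstacle is the apparent circularity in the hypotheses, whereby the admissible radius $\beta$ must dominate $\int_{I_\alpha(t_0)}m$, whose integrand is controlled by suprema of $f$ and $g$ over a $\beta$-neighborhood of $U$; fixing the reference radius $\bar\beta$ in advance severs this loop, after which every remaining estimate is routine.
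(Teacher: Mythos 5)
Your proof is correct and follows essentially the same route as the paper's: the same choices $m(t)=F_0+G_0|u(t)|$ and $k(t)=L_f+L_g|u(t)|$, Cauchy--Schwarz to bound the window integrals by quantities that vanish as $\alpha\to0^+$ with the radius held fixed, and the bound $M$ read off from $|x(t,t_0,x_0)-x_0|\leqslant\beta$ plus the boundedness of $U$. Your explicit resolution of the apparent circularity (fixing $\bar\beta$ before choosing $\alpha$) and your use of a convex compact set $K$ to justify the mean value inequality are, if anything, slightly more careful than the paper, which applies the derivative bound directly on the possibly non-convex set $\bigcup_{x_0\in U}B_\beta(x_0)$ and chooses $\alpha$, $\beta$ in one breath.
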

\begin{proof}
Let $B_{\beta,U}:=\bigcup_{x_0\in U}B_\beta(x_0)$. 
We show that the conditions in Proposition~\ref{prop:ode_new} are satisfied. First, 
\begin{align*}
|f(x)+g(x)u(t)|&\leqslant|f(x)|+\|g(x)\|\,|u(t)|\\
&\leqslant \sup_{x\in B_{\beta,U}}|f(x)|+\sup_{x\in B_{\beta,U}}\|g(x)\|\,|u(t)|:=m(t).
\end{align*}
Then, for $t_0\geqslant0$, 
\begin{align}
\int_{I_\alpha(t_0)}m(t)\,dt &\leqslant 2\alpha\sup_{x\in B_{\beta,U}}|f(x)|+\sup_{x\in B_{\beta,U}}\|g(x)\|\int_{I_{\alpha}(t_0)}|u(t)|\,dt\nonumber\\
&\leqslant 2\alpha \sup_{x\in B_{\beta,U}}|f(x)|+\sqrt{\alpha}\sup_{x\in B_{\beta,U}}\|g(x)\|\|u\|_{L^2((0,\infty);\mathbb{R}^m)}.\nonumber
\end{align}
Also, we have
\begin{align}
|f(x)+g(x)u(t)-f(y)-g(y)u(t)|&\leqslant 
    |f(x)-f(y)|+\|g(x)-g(y)\|\,|u(t)|\nonumber\\
&\leqslant \sup_{x\in B_{\beta,U}}\|Df(x)\|\,|x-y| +L_g|x-y|\,|u(t)|:=k(t),\nonumber
\end{align}
where $L_g$ is a Lipschitz constant for $g$ in $B_{\beta,U}$. Then, for $I_\alpha(t_0)$, $t_0\geqslant0$,
\[
\int_{I_\alpha(t_0)}k(t)\,dt \leqslant 2\alpha \sup_{x\in B_{\beta,U}}\|Df(x)\|+\sqrt{\alpha}L_g\|u\|_{L^2((0,\infty);\mathbb{R}^m)}.
\]
We can find $\alpha$, $\beta>0$ independently of $t_0$ satisfying 
\begin{gather*}
2\alpha \sup_{x\in B_{\beta,U}}|f(x)|+\sqrt{\alpha}\sup_{x\in B_{\beta,U}}\|g(x)\|\|u\|_{L^2((0,\infty);\mathbb{R}^m)}<\beta,\\
2\alpha \sup_{x\in B_{\beta,U}}\|Df(x)\|+\sqrt{\alpha}L_g\|u\|_{L^2((0,\infty);\mathbb{R}^m)}<\frac{1}{2},
\end{gather*}
showing that all the conditions in Proposition~\ref{prop:ode_new} are satisfied. Since $|x(t,t_0,x_0)-x_0|<\beta$ for $t\in I_{\alpha}(t_0)$ and we can take an $r>0$ such that if $x_0\in U$ then $|x_0|<r$, 
\[
|x(t,t_0,x_0)|\leqslant |x_0|+\beta<r+\beta,
\]
the right side of which is independent of $t_0$. 
\end{proof}
%
\begin{lemma}\label{lemma:h_convergence}
Let $H:\mathbb{R}^n\to\mathbb{R}$ be a nonnegative locally Lipschitz function, which is coercive. 
Let $x(t)$ be a solution to (\ref{eqn:n_sys}) such that $\int_0^\infty H(x(t))\,dt<\infty$ for a $u\in L^2((0,\infty);\mathbb{R}^m)$. Then, $x(t)$ is bounded for $t\geqslant0$ and $H(x(t))\to0$ as $t\to\infty$.
\end{lemma}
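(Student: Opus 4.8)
The plan is to prove the two assertions in sequence: first that the trajectory $x(t)$ is bounded on $[0,\infty)$, and then, using boundedness, that $H(x(t))\to0$. The second part is a Barbalat-type argument and is routine once boundedness is in hand, so the real work is the boundedness estimate.

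For boundedness I would argue by contradiction, localizing everything to a single \emph{fixed} compact shell so that the possibly unbounded growth of $f$ and $g$ never enters. By coercivity of $H$, fix $R>0$ with $H(x)\geqslant1$ whenever $|x|\geqslant R$, and set $A:=\{x\,|\,R+1\leqslant|x|\leqslant R+2\}$, a compact set on which $|f|$ and $\|g\|$ are bounded, say by $\bar f$ and $\bar g$, and on which $H\geqslant1$. Since $\int_0^\infty H(x(t))\,dt<\infty$ and $H\geqslant0$, necessarily $\liminf_{t\to\infty}H(x(t))=0$, so $x(t)$ returns to $\{|x|<R\}$ along a sequence of times tending to infinity. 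If $x$ were unbounded it would also exceed $|x|=R+2$ along a sequence tending to infinity, and hence would perform infinitely many disjoint upward crossings of the shell $A$; let $[\sigma_k,\tau_k]$ be the corresponding time intervals, put $\delta_k:=\tau_k-\sigma_k$ and $E_k:=\int_{\sigma_k}^{\tau_k}|u(t)|^2\,dt$. On each crossing the arc length is at least the shell width, so by Cauchy--Schwarz
\[
1\leqslant\int_{\sigma_k}^{\tau_k}|\dot x(t)|\,dt\leqslant\bar f\,\delta_k+\bar g\sqrt{\delta_k}\sqrt{E_k}.
\]
Because the intervals are disjoint and $H\geqslant1$ on $A$, I obtain $\sum_k\delta_k\leqslant\int_0^\infty H(x(t))\,dt<\infty$ and $\sum_k E_k\leqslant\|u\|_{L^2((0,\infty);\mathbb{R}^m)}^2<\infty$; hence $\delta_k\to0$ and $E_k\to0$, which makes the right-hand side above tend to $0$ and contradicts the lower bound $1$. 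Therefore $x$ is bounded.

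With $|x(t)|\leqslant M$ for all $t$, the maps $f$ and $g$ are bounded on $\overline{B_M}$, so the same estimate already used for equicontinuity in the proof of Proposition~\ref{prop:minimizer} gives
\[
|x(t_2)-x(t_1)|\leqslant\bar f|t_2-t_1|+\bar g\sqrt{|t_2-t_1|}\,\|u\|_{L^2((0,\infty);\mathbb{R}^m)},
\]
so that $x$ is uniformly continuous on $[0,\infty)$. Since $H$ is locally Lipschitz it is Lipschitz on $\overline{B_M}$, whence $t\mapsto H(x(t))$ is uniformly continuous as well. A standard Barbalat argument then finishes the proof: if $H(x(t))\not\to0$ there are $\varepsilon>0$ and $t_n\to\infty$ with $H(x(t_n))\geqslant\varepsilon$, and uniform continuity provides a fixed $\eta>0$ with $H(x(t))\geqslant\varepsilon/2$ on each $(t_n-\eta,t_n+\eta)$; choosing these intervals disjoint forces $\int_0^\infty H(x(t))\,dt=\infty$, a contradiction.

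The one delicate point is the boundedness step, and specifically avoiding any growth hypothesis on $f$ and $g$: the idea is that a crossing of the fixed shell $A$ can be charged either to the time it consumes (controlled globally, because $H\geqslant1$ on $A$ and $H(x(\cdot))$ is integrable) or to the control energy it consumes (controlled globally, because $u\in L^2$), and Cauchy--Schwarz shows a crossing cannot be cheap in both simultaneously. Keeping the shell fixed is precisely what keeps $\bar f$ and $\bar g$ finite without assuming anything about the behaviour of $f$ and $g$ at infinity.
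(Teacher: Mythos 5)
Your proof is correct, but the boundedness step---which is the heart of the lemma---follows a genuinely different route from the paper's. The paper first establishes a uniform-in-initial-time local estimate (Lemma~\ref{lemma:ode_affine}, built on the contraction-mapping result Proposition~\ref{prop:ode_new}): from any point of the compact ball $B_R(0)$ and any initial time $t_0$, the solution stays within a uniform bound $M$ on an interval of uniform length $\alpha$, the uniformity in $t_0$ coming from $\int_{I_\alpha(t_0)}|u|\leqslant\sqrt{\alpha}\,\|u\|_{L^2((0,\infty);\mathbb{R}^m)}$. It then combines this with a pigeonhole claim: an integrable nonnegative function cannot exceed $\mu$ on an entire window of fixed length $2\alpha$ arbitrarily far out, so $x(t)$ re-enters $B_R(0)$ with gaps at most $2\alpha$, and the uniform local estimate caps every excursion by $M$. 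Your argument instead runs by contradiction through disjoint crossings of a fixed compact shell: each crossing has arc length at least $1$, yet its duration is charged to $\int_0^\infty H(x(t))\,dt$ (since $H\geqslant1$ on the shell) and its control cost to $\|u\|_{L^2((0,\infty);\mathbb{R}^m)}^2$, both summable over disjoint crossings, and Cauchy--Schwarz shows a crossing cheap in both time and energy cannot traverse the shell. The two proofs exploit the same two resources (integrability of $H(x(\cdot))$ and $u\in L^2$) localized to a fixed compact set so that no growth conditions on $f$, $g$ are needed, but yours is more elementary and self-contained, bypassing the ODE machinery of Proposition~\ref{prop:ode_new} entirely; it is in fact the same sphere-crossing technique the paper uses later in the proof of Theorem~\ref{thm:minimizer_2}, where the shells grow with the trajectory and growth conditions become necessary, whereas a fixed shell suffices here. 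The paper's route, on the other hand, develops reusable infrastructure (Proposition~\ref{prop:ode_new} and Lemma~\ref{lemma:ode_affine} also feed into Proposition~\ref{prop:hale_lemma} and Proposition~\ref{prop:detect}) and yields an explicit uniform bound $M$ rather than a pure contradiction. Your concluding Barbalat step coincides with the paper's, and you are in fact slightly more careful there: you note that uniform continuity of $H(x(\cdot))$ requires the uniform continuity of $x$ itself, which you derive from the boundedness just proved, a point the paper leaves implicit.
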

\begin{proof}
From the assumptions on $H$, there exist constants $R>0$, $\mu>0$ such that
\begin{align}
|x|> R \ \Rightarrow \mu< H(x).\label{ineq:outsideR}
\end{align}
Take the compact set $U$ in Lemma~\ref{lemma:ode_affine} as $U=B_{R}(0)$. Take also $\alpha$ and $M$ in the same proposition. We note that if $x(\bar t)\in U$ for some $\bar t >0$, then, 
\begin{equation}
|x(t)|<M \quad \text{for }t\in I_\alpha(\bar t),\label{ineq:bunded_M}
\end{equation}
where $M$ and $\alpha$ are independent of $\bar t$. 
We next prove the following claim. If a nonnegative integrable function $\psi(t)$ satisfies $\int_0^\infty\psi(t)\,dt<\infty$, then, for any $l>0$ and $\varepsilon>0$, there exists a $T_{l,\varepsilon}>0$ such that for any $\tau >T_{l,\varepsilon}$, there exists a $\bar t\in [\tau-\varepsilon,\tau+\varepsilon]$ such that $\psi(\bar t)<l$. Suppose that the claim is not true. Then, there exist $l>0$ and $\varepsilon>0$ such that for any $T>0$, there exists a $t>T$ such that $\psi(t)\geqslant l$ for all $t\in [t-\varepsilon,t+\varepsilon]$. From this, we take a sequence $\{t_n\}$, $t_n>0$, $t_n +2\varepsilon<t_{n+1}$, $t_n\to\infty$ $(n\to\infty)$ such that 
$\psi(t)\geqslant l$ for all $t\in [t_n-\varepsilon,t_n+\varepsilon]$. Then we have 
\[
\int_0^\infty \psi(t)\,dt> \sum_{n=1}^\infty \int_{t_n-\varepsilon}^{t_n+\varepsilon}\psi(t)\,dt>\sum_{n=1}^\infty 2l\varepsilon=\infty,
\]
which is a contradiction. Now we apply this claim with $l=\mu$ and $\varepsilon=\alpha$. Then, there exists a $T_{\mu,\alpha}>0$ such that for any $\tau>T_{\mu,\alpha}$, there exists a $\bar t\in [\tau-\alpha,\tau+\alpha]$ such that $H(x(\bar t))\leqslant\mu$. Then, from (\ref{ineq:outsideR}), we have $|x(\bar t)|\leqslant R$, equivalently, $x(\bar t)\in U$. Then, from (\ref{ineq:bunded_M}), 
\begin{equation}
|x(t)|<M \quad \text{for }t\in I_\alpha(\bar t).\label{ineq:boundM2}
\end{equation}
But, $\bar t\in [\tau-\alpha,\tau+\alpha]$ implies that $\tau\in I_\alpha(\bar t)$. Then, from (\ref{ineq:boundM2}), $|x(\tau)|<M$. We emphasize that $M$ and $\alpha$ are independent of $\bar t$ and $\tau$ is any sufficiently large time (larger than $T_{\mu,\alpha}$). Therefore, on $[T_{\mu,\alpha},\infty)$, $x(t)$ is bounded. Since $x(t)$ is continuous, it is bounded for $[0,\infty)$ and thus $H(x(t))$ is uniformly continuous on $[0,\infty)$ since $H$ is locally Lipschitz. From Barbalat's Lemma (see, e.g., \cite{Popov:73:HCS}), we have $H(x(t))\to0$ as $t\to\infty$. 
\end{proof}
\begin{remark}
Lemma~\ref{lemma:h_convergence} can be seen as a nonlinear finite-dimensional modification of Datko-Pazy Theorem (see \cite{Pazy:83:SLOAPDE}, Page~116, Theorem~4.1). Namely, in addition to the hypothes in Lemma~\ref{lemma:h_convergence}, assume that for any $\varepsilon>0$, there exists a $\delta_\varepsilon>0$ such that $H(x)<\delta_\varepsilon$ if $|x|<\varepsilon$. Then, $x(t)\to0$ as $t\to\infty$.
\end{remark}
The following Proposition is a generalization of Lemma~3.1 on Page~24 in \cite{Hale:73:ODE} for Carath\'eodory solution. 
\begin{proposition}\label{prop:hale_lemma}
Let $D\subset \mathbb{R}^{n+1}$ be an open set. For $m=0,1,\ldots,$ let $f_m:D\to\mathbb{R}^n$ be maps such that $f_m(t,x)$ are measurable in $t$ for each $x$ and continuous in $x$ for each $t$. 
For $f_m(t,x)$, $m=0,1,2,\ldots,$ assume the following hypotheses. 
For any compact $\bar D\subset D$, there exist integrable $k_0(t)$, $k_m(t)$, $\bar{k}_m(t)$, $m=1,2,\ldots$, such that
\begin{align*}
&|f_0(t,x)|\leqslant k_0(t)\quad \text{for}\quad (t,x)\in \bar D,\\
&|f_m(t,x)-f_0(t,m)|\leqslant k_m(t)\quad \text{for}\quad (t,x)\in \bar D,\\
&\int_Ik_m(t)\,dt \to 0 \ (m\to\infty)\quad\text{for any interval }I\subset\bar D,\\
& \int_I k_m(t)\,dt\leqslant C |I| \quad\text{for any interval }I\subset\bar D, \\
&\qquad \qquad\qquad\qquad    \text{ where $C>0$ is a constant independent of $m$},\\
& |f_m(t,x)-f_m(t,y)|\leqslant\bar{k}_m(t)|x-y|\quad \text{for}\quad (t,x), (t,y)\in \bar D,\\
& \int_I\bar{k}_m(t)\,dt \quad \text{is bounded for }m\in\mathbb{N}, I\subset\bar D.
\end{align*}
Let $x_m\in\mathbb{R}^{n}$, $m=0,1,2,\ldots$,  be points such that $x_m\to x_0$ when $m\to\infty$. Let finally $\varphi_m$ be a solution for 
\[
\dot x=f_m(t,x)
\]
passing through $(t_0,x_m)$, $m=0, 1,2,\ldots$. 
If $\varphi_0(t)$ is a unique solution defined on a finite interval $[a,b]$, 
then, for sufficiently large $m$, $\varphi_m$ is defined on $[a,b]$ and uniformly converges to $\varphi_0$ on $[a,b]$ as $m\to\infty$. 
\end{proposition}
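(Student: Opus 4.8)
The plan is to restrict the whole argument to a compact tube around the graph of the reference solution $\varphi_0$, compare each $\varphi_m$ with $\varphi_0$ by a Gronwall estimate built from the Lipschitz bounds $\bar k_m$ and the smallness of the integrals of $k_m$, and then use the continuation theory for Carath\'eodory solutions to promote the resulting conditional bound into an a priori one that keeps $\varphi_m$ inside the tube all the way across $[a,b]$.

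First I would fix the geometry. Since $\varphi_0$ is continuous on the compact interval $[a,b]$, its graph $G_0=\{(t,\varphi_0(t)):t\in[a,b]\}$ is a compact subset of the open set $D$, so there is a $\rho>0$ for which the closed tube $\bar D=\{(t,x):t\in[a,b],\ |x-\varphi_0(t)|\le\rho\}$ lies in $D$. I apply every hypothesis on this $\bar D$, obtaining integrable $k_0,k_m,\bar k_m$ with $\int_{[a,b]}\bar k_m\le B$ for a constant $B$ independent of $m$ and $\int_{[a,b]}k_m\to 0$ as $m\to\infty$. On $\bar D$ each $f_m$ is dominated (by $k_0+k_m$) and Lipschitz in $x$ (with integrable constant $\bar k_m$), so local existence and uniqueness of $\varphi_m$ through $(t_0,x_m)$ follow from Proposition~\ref{prop:ode_new} (equivalently Theorems~5.1 and 5.3 of \cite{Hale:73:ODE}).

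The core estimate is then routine. While $\varphi_m$ is defined on a subinterval $J\ni t_0$ of $[a,b]$ with graph in $\bar D$, I subtract the two integral equations and split the integrand as
\[
f_m(s,\varphi_m(s))-f_0(s,\varphi_0(s))=\bigl[f_m(s,\varphi_m(s))-f_m(s,\varphi_0(s))\bigr]+\bigl[f_m(s,\varphi_0(s))-f_0(s,\varphi_0(s))\bigr],
\]
bounding the first bracket by $\bar k_m(s)\,|\varphi_m(s)-\varphi_0(s)|$ and the second by $k_m(s)$. With $u_m(t)=|\varphi_m(t)-\varphi_0(t)|$ this gives $u_m(t)\le|x_m-x_0|+\int_{[a,b]}k_m+\int_{t_0}^{t}\bar k_m\,u_m$, and Gronwall's inequality yields the uniform-in-$t$ bound $u_m(t)\le\varepsilon_m:=\bigl(|x_m-x_0|+\int_{[a,b]}k_m\bigr)e^{B}$, where $\varepsilon_m\to0$ because $x_m\to x_0$ and $\int_{[a,b]}k_m\to0$. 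Only the Lipschitz property of the perturbed fields $f_m$ is used, so no Lipschitz assumption on $f_0$ is needed and the comparison is made directly against the given $\varphi_0$; the same computation runs in the backward direction from $t_0$.

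The step I expect to be the main obstacle is converting this conditional bound into the assertion that, for large $m$, $\varphi_m$ exists on all of $[a,b]$ and never reaches $\partial\bar D$. I would argue by maximality: let $[t_0,\tau)$ (forward; backward is symmetric) be the largest interval on which $\varphi_m$ is defined and stays in $\bar D$. Fix $m$ so large that $\varepsilon_m<\rho/2$ and $|x_m-x_0|<\rho/2$; the Gronwall bound then forces $u_m<\rho/2$ on $[t_0,\tau)$, so the graph of $\varphi_m$ remains in a compact set strictly interior to $\bar D\subset D$. Since $\varphi_m$ is an integral of an $L^1$ function it has a limit at $\tau$, and the continuation theorem for Carath\'eodory solutions extends it past $\tau$ with $u_m<\rho$ still holding by continuity; this contradicts maximality unless $\tau=b$. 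Hence $\varphi_m$ is defined on all of $[a,b]$ and $\sup_{[a,b]}|\varphi_m-\varphi_0|\le\varepsilon_m\to0$, which is the claimed uniform convergence. The uniform bound $\int_I k_m\le C|I|$ is what one would instead use to get equicontinuity if one preferred a compactness proof passing through the uniqueness of $\varphi_0$, but it is not needed for the Gronwall route above.
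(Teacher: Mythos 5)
Your proof is correct, but it takes a genuinely different route from the paper's. You run a direct Gronwall comparison against $\varphi_0$: splitting $f_m(s,\varphi_m(s))-f_0(s,\varphi_0(s))$ into a Lipschitz term bounded by $\bar k_m(s)\,|\varphi_m(s)-\varphi_0(s)|$ and a perturbation term bounded by $k_m(s)$, you obtain the explicit bound $\sup_{[a,b]}|\varphi_m-\varphi_0|\leqslant\bigl(|x_m-x_0|+\int_{[a,b]}k_m\bigr)e^{B}$, and an a priori bound plus continuation argument then keeps $\varphi_m$ inside the tube across all of $[a,b]$ in one shot. The paper instead follows the compactness template of Hale's Lemma~3.1: on a short interval $I_\alpha(t_0)$ it proves uniform boundedness and equicontinuity of $\{\varphi_m\}$ (this is precisely where the hypothesis $\int_I k_m\,dt\leqslant C|I|$ is used), extracts a uniformly convergent subsequence by Ascoli--Arzel\`a, identifies the limit as a solution of $\dot x=f_0(t,x)$ through $(t_0,x_0)$ via the integral equation and the $\bar k_m$, $k_m$ bounds, invokes the uniqueness hypothesis on $\varphi_0$ to conclude that the full sequence converges, and then steps along the interval in increments of $\alpha$ to cover $[a,b]$. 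Your route buys a quantitative convergence rate and, as you correctly observe, dispenses with two hypotheses that the paper's proof genuinely uses: the uniqueness of $\varphi_0$ (which your estimate renders automatic) and the bound $\int_I k_m\,dt\leqslant C|I|$. What the paper's route buys is a skeleton matching the classical continuous-dependence lemma it generalizes, one that degrades gracefully: the compactness-plus-uniqueness mechanism is what survives if the Lipschitz data $\bar k_m$ were weakened, whereas your argument leans on $\bar k_m$ at every step. Both proofs are valid as written.
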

\begin{proof}
Let $S:=\{(t,\varphi_0(t))\,|\,t\in[a,b]\}$ and $U$ be a compact set in $D$ which contains $S$ in its interior; $S\subset U\subset D$. By the hypotheses, there exist integrable $k_m(t)$ in $U$, $m=0,1,2,\ldots$, such that 
\begin{align}
&|f_0(t,m)|\leqslant k_0(t)\quad \text{for }(t,x)\in U, \label{ineq:f_0}\\
&|f_m(t,x)-f_0(t,x)|\leqslant k_m(t)\quad \text{for }(t,x)\in U, \ m=1,2,\ldots,\label{ineq:fm-f0}\\
&\int_I k_m(t)\,dt\to0 \quad \text{as }m\to\infty \text{ for } I\subset U, \label{eqn:int_km_converg}\\
&\int_I k_m(t)\,dt\leqslant C|I| \text{ for } I\subset U, \label{ineq:km_int_uniform_bdd}
\end{align}
where $C>0$ is a constant independent of $m$. 
Take $\alpha>0$, $\beta>0$ such that 
\begin{gather}
I_\alpha(\bar t)\times B_{\beta}(\bar x) \subset U \text{ for }(\bar{t},\bar{x})\in S,\nonumber\\
\int_I k_0(s)+k_m(s)\,ds\leqslant\beta  \text{ for } I\subset I_\alpha(t_0),\ m\in\mathbb{N}.\label{eqn:int_k0_km_beta}
\end{gather}
For sufficiently large $m$, $I_\alpha(t_0)\times B_\beta(x_m)\subset U$ holds and $\varphi_m(t)$ is defined on $I_\alpha(t_0)$ satisfying $|\varphi_m(t)-x_m|\leqslant\beta$ for $t\in I_\alpha(t_0)$, which is from 
\begin{align*}
|f_m(t,x)| &\leqslant |f_0(t,x)|+k_m(t)\\
    &\leqslant k_0(t)+k_m(t)\quad\text{by (\ref{ineq:f_0})}
\end{align*}
and (\ref{eqn:int_k0_km_beta}). This shows $(t,\varphi_m(t))\in I_\alpha(t_0)\times B_\beta(x_m)\subset U$ and $\varphi_m(t)$ is uniformly bounded. 
Moreover, for $t_1<t_2$ in $I_\alpha(t_0)$, 
\begin{align*}
|\varphi_m(t_2)-\varphi_m(t_1)| &\leqslant \int_{t_1}^{t_2}|f_m(s,\varphi_m(s))|\,ds\\
&\leqslant\int_{t_1}^{t_2}k_0(s)+k_m(s)\,ds\leqslant \int_{t_1}^{t_2}k_0(s)\,ds+C(t_2-t_1)
\end{align*}
by (\ref{ineq:km_int_uniform_bdd}) 
and therefore $\varphi_m$ is equicontinuous. By using Ascoli-Arzel\'a Theorem, up to subsequence, we have 
\[
\varphi_m\to\bar{\varphi}\quad \text{uniformly on }I_\alpha(t_0)
\]
for some $\bar \varphi\in C(I_\alpha(t_0);\mathbb{R}^n)$. We will show $\bar \varphi=\varphi_0$ by using the integral equation for $\varphi_m$;
\begin{equation}
\varphi_m(t)=x_m+\int_{t_0}^tf_m(s,\varphi_m(s))\,ds.\label{eqn:int_eq_phim}
\end{equation}
Using the hyphotheses, let $\bar{k}_m(t)$ be integrable functions on $I_\alpha(t_0)$, $m=1,2,\ldots$, such that
\begin{gather}
|f_m(t,x)-f_m(t,y)|\leqslant \bar{k}_m(t)|x-y|
\quad \text{for }(t,x), (t,y)\in U\label{ineq:fm_kbar}\\
\int_{t_0}^t\bar{k}_m(s)\,ds \text{ is bounded for }m
 \text{ and }t\in I_\alpha(t_0).\label{eqn:int_kbar_bounded}
\end{gather}
Then, it follows that, for $t_0\leqslant t\leqslant t_0+\alpha$, 
\begin{align*}
&\left| \int_{t_0}^tf_m(s,\varphi_m(s))\,ds 
    - \int_{t_0}^tf_0(s,\bar{\varphi}(s))\,ds\right|\\
&\quad  \leqslant\int_{t_0}^t|f_m(s,\varphi_m(s))-f_0(s,\bar{\varphi}(s))|\,ds\\
&\quad  \leqslant\int_{t_0}^t 
    |f_m(s,\varphi_m(s))-f_m(s,\bar{\varphi}(s))|\,ds 
    +\int_{t_0}^t |f_m(s,\bar{\varphi}(s))-f_0(s,\bar{\varphi}(s))|\,ds \\ 
&\quad  \leqslant\int_{t_0}^t \bar{k}_m(s)|\varphi_m(s)-\bar{\varphi}(s)|\,ds
+\int_{t_0}^tk_m(s)\,ds\quad \text{by (\ref{ineq:fm-f0}), (\ref{ineq:fm_kbar})}\\
&\quad  \leqslant \|\varphi_m-\bar{\varphi}\|_{C(I_\alpha(t_0);\mathbb{R}^n)}\int_{t_0}^{t}\bar{k}_m(s)\,ds +\int_{t_0}^tk_m(s)\,ds\\
&\quad \to0 \ (m\to\infty),
\end{align*}
where we have used the uniform convergence of $\varphi_m$ to $\bar \varphi$, (\ref{eqn:int_km_converg}) and (\ref{eqn:int_kbar_bounded}). Now, taking the limit $m\to\infty$ in (\ref{eqn:int_eq_phim}) shows that $\bar\varphi$ is a solution for $\dot x=f_0(t,x)$ passing through $(t_0,x_0)$. Thus, the uniqueness of initial value problem implies that $\bar{\varphi}=\varphi_0$. This means that for every convergent subsequence of $\{\varphi_m\}$ on $I_\alpha(t_0)$ converges to $\varphi_0$ and therefore, $\varphi_m$ uniformly converges to $\varphi_0$ on $I_\alpha(t_0)$. 

Replacing $x_m$ with $\varphi_m(t_0+\alpha)$, $x_0$ with $\varphi_0(t_0+\alpha)$ and $t_0$ with $t_0+\alpha$ and applying the above procedure, one obtains $\varphi_m(t)$ defined on $[t_0-2\alpha, t_0+2\alpha]$ that is uniformly convergent to $\varphi_0$. Repeating this, the uniform convergence of $\varphi_m(t)$ to $\varphi_0(t)$ on $[a,b]$ is proved. 
\end{proof}
%


\begin{thebibliography}{10}

\bibitem{Aguilar2014527}
{\sc C.~O. Aguilar and A.~J. Krener}, {\em Numerical solutions to the {B}ellman
  equation of optimal control}, Journal of Optimization Theory and
  Applications, 160 (2014), pp.~527--552,
  \url{https://doi.org/10.1007/s10957-013-0403-8}.

\bibitem{Al'brekht:61:jamm}
{\sc E.~G. Al'brekht}, {\em On the optimal stabilization of nonlinear systems},
  Journal of Applied Mathematics and Mechanics, 25 (1961), pp.~1254--1266.

\bibitem{Anderson:87:automatica}
{\sc B.~D.~O. Anderson and P.~V. Kokotovic}, {\em Optimal control problems over
  large time intervals}, Automatica, 23 (1987), pp.~355--363.
\clred{
\bibitem{Aseev20071}
{\sc S.~Aseev and A.~Kryazhimskii}, {\em The pontryagin maximum principle and
  optimal economic growth problems}, Proceedings of the Steklov Institute of
  Mathematics, 257 (2007), pp.~1--255,
  \url{https://doi.org/10.1134/S0081543807020010}.

\bibitem{Aseev20041094}
{\sc S.~M. Aseev and A.~V. Kryazhimskiy}, {\em The pontryagin maximum principle
  and transversality conditions for a class of optimal control problems with
  infinite time horizons}, SIAM Journal on Control and Optimization, 43 (2004),
  pp.~1094--1119, \url{https://doi.org/10.1137/S0363012903427518}.
}

\bibitem{Athans:66:OC}
{\sc M.~Athans and P.~L. Falb}, {\em Optimal Control: An Introduction to the
  Theory and Its Applications}, McGrow-Hill, New York, 1966.

\bibitem{Beard:98:ijc}
{\sc R.~W. Beard and T.~W. McLain}, {\em Successive {G}alerkin approximation
  algorithms for nonlinear optimal and robust control}, Int. J. Control, 71
  (1998), pp.~717--743.

\bibitem{Beard:97:automatica}
{\sc R.~W. Beard, G.~N. Sardis, and J.~T. Wen}, {\em Galerkin approximations of
  the generalized {H}amilton-{J}acobi-{B}ellman equation}, Automatica, 33
  (1997), pp.~2195--2177.

\bibitem{Beeler:00:jota}
{\sc S.~C. Beeler, H.~T. Tran, and H.~T. Banks}, {\em Feedback control
  methodologies for nonlinear systems}, Journal of Optimization Theory and
  Applications, 107 (2000), pp.~1--33.

\bibitem{Bellman:57:DP}
{\sc R.~Bellman}, {\em Dynamic Programming}, Princeton University Press,
  Princeton, New Jersey, 1957.

\bibitem{Berberich:18:ieeecst}
{\sc J.~Berberich, J.~K{\"o}hler, F.~Allg{\"o}wer, and M.~A. M{\"u}ller}, {\em
  Indefinite linear quadratic optimal control: Strict dissipativity and
  turnpike properties}, IEEE Control Systems Letters, 2 (2018), pp.~399--404.

\bibitem{Bryson:96:cssmag}
{\sc A.~E. Bryson}, {\em Optimal control-1950 to 1985}, IEEE Control Systems
  Magazine, 16 (1996), pp.~26--33.

\bibitem{Bryson:75:AOC}
{\sc A.~E. Bryson, Jr. and Y.-C. Ho}, {\em Applied Optimal Control}, Hemisphere
  Publishing, Washington, D.C., 1975.
\newblock Revised printing.

\bibitem{Byrnes:89:syscon}
{\sc C.~I. Byrnes and A.~Isidori}, {\em New results and examples in nonlinear
  feedback stabilization}, Syst. Control Lett., 12 (1989), pp.~437--442.

\bibitem{Byrnes:91:ieee-ac}
{\sc C.~I. Byrnes and A.~Isidori}, {\em Asymptotic stabilization of minimum
  phase nonlinear systems}, IEEE Trans. Automat. Control, 36 (1991),
  pp.~1122--1137.

\clred{
\bibitem{Cannarsa20181188}
{\sc P.~Cannarsa and H.~Frankowska}, {\em Value function, relaxation, and
  transversality conditions in infinite horizon optimal control}, Journal of
  Mathematical Analysis and Applications, 457 (2018), pp.~1188--1217,
  \url{https://doi.org/10.1016/j.jmaa.2017.02.009}.
}

\bibitem{Carlson:91:IHOC}
{\sc D.~A. Carlson, A.~Haurie, and A.~Leizarowitz}, {\em Infinite Horizon
  Optimal Control}, Springer-Verlag, Berlin Heidelberg, 2nd~ed., 1991.

\bibitem{Cesari:83:OTA}
{\sc L.~Cesari}, {\em Optimization -- Theory and Applications},
  Springer-Verlag, New York, 1983.

\clred{
\bibitem{Chow:82_96:MBT}
{\sc S.-N. Chow and J.~K. Hale}, {\em Methods of Bifurcation Theory},
  Springer-Verlag, corrected second printing~ed., 1996.
}

\bibitem{Damm:14:sicon}
{\sc T.~Damm, L.~Gr{\"u}ne, M.~Stieler, and K.~Worthmann}, {\em An exponential
  turnpike theorem for dissipative discrete time optimal control problems},
  SIAM J. Control Optim., 52 (2014), pp.~1935--1957.

\bibitem{Dorfman:58:LPEA}
{\sc R.~Dorfman, P.~A. Samuelson, and R.~M. Solow}, {\em Linear Programming and
  Economic Analysis}, McGraw-Hill, New York, 1958.

\bibitem{Faulwasser:17:automatica}
{\sc T.~Faulwasser, M.~Korda, C.~N. Jones, and D.~Bonvin}, {\em On turnpike and
  dissipativity properties of continuous-time optimal control problems},
  Automatica, 81 (2017), pp.~297--304.

\bibitem{Grune:18:sicon}
{\sc L.~Gr{\"u}ne and R.~Guglielmi}, {\em Turnpike properties and strict
  dissipativity for discrete time linear quadratic optimal control problems},
  SIAM J. Control Optim., 56 (2018), pp.~1282--1302.

\bibitem{Grune:16:syscon}
{\sc L.~Gr{\"u}ne and M.~A. M{\"u}ller}, {\em On the relation between strict
  dissipativity and turnpike properties}, Syst. Control Lett., 90 (2016),
  pp.~45--53.

\bibitem{Gugat:16:syscon}
{\sc M.~Gugat, E.~Tr{\'e}lat, and E.~Zuazua}, {\em Optimal {N}eumann control
  for the 1d wave equation: {F}inite horizon, infinite horizon, boundary
  tracking terms and the turnpike property}, Systems and Control Letters, 90
  (2016), pp.~61--70.

\bibitem{Hale:73:ODE}
{\sc J.~K. Hale}, {\em Ordinary Differential Equations}, Krieger Publishing
  Company, Florida, 2nd~ed., 1980.

\bibitem{Halkin:74:econometrica}
{\sc H.~Halkin}, {\em Necessary conditions for optimal control problems with
  infinite horizons}, Econometrica, 42 (1974), pp.~267--272.

\bibitem{Horibe:17:ieee_cst}
{\sc T.~Horibe and N.~Sakamoto}, {\em Optimal swing up and stabilization
  control for inverted pendulum via stable manifold method}, IEEE Trans. on
  Control System Technology, 26 (2017), pp.~708--715.

\bibitem{Horibe:19:ieee_cst}
{\sc T.~Horibe and N.~Sakamoto}, {\em Nonlinear optimal control for swing up
  and stabilization of the {A}crobot via stable manifold approach: Theory and
  experiment}, IEEE Trans. on Control System Technology, 27 (2019),
  pp.~2374--2387.

\bibitem{Isidori:95:NCS}
{\sc A.~Isidori}, {\em Nonlinear Control Systems}, Springer Verlag, London,
  3rd~ed., 1995.

\bibitem{Khalil:92:NS}
{\sc H.~K. Khalil}, {\em Nonlinear Systems}, Macmillan Pub. Co., New York,
  1st~ed., 1992.

\bibitem{Kreisselmeier:94:ieee-ac}
{\sc G.~Kreisselmeier and T.~Birkh{\"o}lzer}, {\em Numerical nonlinear
  regulator design}, IEEE Trans. Automat. Control, 39 (1994), pp.~33--46.

\bibitem{Krstic:95:NACD}
{\sc M.~Krsti\'c, I.~Kanellakopoulus, and P.~Kokotovi\'c}, {\em Nonlinear and
  Adaptive Control Design}, John Wiley \& Sons, Inc., New York, 1995.

\bibitem{Liberzon:12:CVOCT}
{\sc D.~Liberzon}, {\em Calculus of Variations and Optimal Control Theory},
  Princeton University Press, New Jersey, 2012.

\bibitem{Lukes:69:sicon}
{\sc D.~L. Lukes}, {\em Optimal regulation of nonlinear dynamical systems},
  SIAM J. Control Optim., 7 (1969), pp.~75--100.

\bibitem{Mckenzie:63:econometrica}
{\sc L.~McKenzie}, {\em Turnpike theorems for a generalized {L}eontief model},
  Econometrica, 31 (1963), pp.~165--180.

\bibitem{Navasca2007251}
{\sc C.~Navasca and A.~J. Krener}, {\em Patchy solutions of
  {H}amilton-{J}acobi-{B}ellman partial differential equations}, Lecture Notes
  in Control and Information Sciences, 364 (2007), pp.~251--270,
  \url{https://doi.org/10.1007/978-3-540-73570-0_20}.

\bibitem{Nijmeijer:90:NDCS}
{\sc H.~Nijmeijer and A.~van~der Schaft}, {\em Nonlinear Dynamical Control
  Systems}, Springer-Verlag, New York, 1990.

\bibitem{Ohtsuka:11:ieee-ac}
{\sc T.~Ohtsuka}, {\em Solutions to the {H}amilton-{J}acobi equation with
  algebraic gradients}, IEEE Trans. Automat. Control, 56 (2011),
  pp.~1874--1885.

\bibitem{Oishi:17:ifac-wc}
{\sc Y.~Oishi and N.~Sakamoto}, {\em Numerical computational improvement of the
  stable-manifold method for nonlinear optimal control}, IFAC-PapersOnLine, 50
  (2017), pp.~5103--5108, \url{https://doi.org/10.1016/j.ifacol.2017.08.777}.
\newblock In Proc. {\em 20th IFAC World Congress}.

\bibitem{Pazy:83:SLOAPDE}
{\sc A.~Pazy}, {\em Semigroups of linear operator and applications to partial
  differential equations}, Springer-Verlag, New York, 1983.

\bibitem{Pontryagin:62:MTOP}
{\sc L.~S. Pontryagin, V.~G. Boltyanskii, R.~V. Gamkrelidze, and E.~F.
  Mishchenko}, {\em The Mathematical Theory of Optimal Processes}, John Wiley
  \& Sons,~Inc., New York, London, 1962.

\bibitem{Popov:73:HCS}
{\sc V.~M. Popov}, {\em Hyperstability of Control Systems}, Springer-Verlag,
  New York, 1973.

\bibitem{Porretta:13:sicon}
{\sc A.~Porretta and E.~Zuazua}, {\em Long time versus steady state optimal
  control}, SIAM J. Control Optim., 51 (2013).

\bibitem{Porretta:16:INdAM}
{\sc A.~Porretta and E.~Zuazua}, {\em Remarks on long time versus steady state
  optimal control}, Springer INdAM Series, 15 (2016), pp.~67--89.

\bibitem{Rapaport:04:esaim}
{\sc A.~Rapaport and P.~Cartigny}, {\em Turnpike theorems by a value function
  approach}, ESAIM: Control, Optimisation and Calculus of Variations, 10
  (2004), pp.~123--141.

\bibitem{Rockafellar:73:jota}
{\sc R.~T. Rockafellar}, {\em Saddle points of {H}amiltonian systems in convex
  problems of {L}agrange}, Journal of Optimization Theory and Applications, 12
  (1973), pp.~367--390.

\bibitem{Sakamoto:02:sicon}
{\sc N.~Sakamoto}, {\em Analysis of the {H}amilton-{J}acobi equation in
  nonlinear control theory by symplectic geometry}, SIAM J. Control Optim., 40
  (2002), pp.~1924--1937.

\bibitem{Sakamoto:13:automatica}
{\sc N.~Sakamoto}, {\em Case studies on the application of the stable manifold
  approach for nonlinear optimal control design}, Automatica, 49 (2013),
  pp.~568--576.

\bibitem{Sakamoto:08:ieee-ac}
{\sc N.~Sakamoto and A.~J. van~der Schaft}, {\em Analytical approximation
  methods for the stabilizing solution of the {H}amilton-{J}acobi equation},
  IEEE Trans. Automat. Control, 53 (2008), pp.~2335--2350.

\bibitem{sakamoto:20:prep_cdc}
{\sc N.~Sakamoto and E.~Zuazua}, {\em The turnpike property in nonlinear
  optimal control --- {A} geometric approach}.
\newblock arXiv:1903.09069 (An extended version of the paper presented at CDC
  2019), 2021.

\bibitem{Sepulchre:97:CNC}
{\sc R.~Sepulchre, M.~Jankovi\'c, and P.~V. Kokotovi\'c}, {\em Constructive
  Nonlinear Control}, Springer-Verlag London, London, 1997.

\bibitem{Sussmann:91:ieee_tac}
{\sc H.~J. Sussmann and P.~V. Kokotovic}, {\em The peaking phenomenon and the
  global stabilization of nonlinear systems}, IEEE Trans. Automat. Control, 36
  (1991), pp.~424-- 440.

\bibitem{Trelat:18:sicon}
{\sc E.~Tr\'elat, C.~Zhang, and E.~Zuazua}, {\em Steady-state and periodic
  exponential turnpike property for optimal control problems in {H}ilbert
  spaces}, SIAM J. Control Optim., 56 (2018), pp.~1222--1252.

\bibitem{Trelat:15:jde}
{\sc E.~Tr\'elat and E.~Zuazua}, {\em The turnpike property in
  finite-dimensional nonlinear optimal control}, Journal of Differential
  Equations, 258 (2015), pp.~81--114.

\bibitem{vanderSchaft:17:LGPTNC}
{\sc A.~J. van~der Schaft}, {\em $L_2$-Gain and Passivity Techniques in
  Nonlinear Control}, Springer International Publishing, 3rd~ed., 2017.

\bibitem{Wilde:72:ieeetac}
{\sc R.~R. Wilde and P.~V. Kokotovic}, {\em A dichotomy in linear control
  theory}, IEEE Trans. Automat. Control, 17 (1972), pp.~382--383.

\bibitem{Zaslavski:06:TPCVOC}
{\sc A.~J. Zaslavski}, {\em Turnpike properties in the calculus of variations
  and optimal control}, Springer, 2006.

\bibitem{Zuazua:17:arc}
{\sc E.~Zuazua}, {\em Large time control and turnpike properties for wave
  equations}, Annual Reviews in Control, 44 (2017), pp.~199--210.

\end{thebibliography}

\end{document}